\documentclass[3p, times]{elsarticle}
\usepackage{amsmath}
\usepackage{amssymb}
\usepackage{amsthm}

\newtheorem{theorem}{Theorem}
\newtheorem{remark}[theorem]{Remark}
\newtheorem{lem}[theorem]{Lemma}
\newtheorem{hyp}[theorem]{Assumption}

\newcommand{\dps}{\displaystyle}
\newcommand{\eps}{\varepsilon}
\newcommand{\RR}{\mathbb R}
\newcommand{\ZZ}{\mathbb Z}

\newcommand{\Aper}{A_\text{per}}
\newcommand{\approximations}{V^h}
\newcommand{\cell}[1]{Q_{#1}^h}
\newcommand{\cellindices}{\{0,\ldots,N-1\}^d}
\newcommand{\DFT}{\operatorname{DFT}}
\renewcommand{\div}{\operatorname{div}}
\newcommand{\freqindiceseven}{\{-M+1,\ldots,M\}^d}
\newcommand{\freqindicesodd}{\{-M,\ldots,M\}^d}
\newcommand{\green}{\Gamma}
\newcommand{\polarization}{\tau}
\newcommand{\polarizations}{V}
\newcommand{\sumvoxels}[1]{\sum_{#1_1=0}^{N-1}\cdots\sum_{#1_d=0}^{N-1}}
\newcommand{\test}{\sigma}
\newcommand{\trial}{\tau}
\newcommand{\unitcell}{Q}
\newcommand{\unitcellzero}{{Q_0}}
\newcommand{\Izero}{{\cal I}_0}
\newcommand{\I}{{\cal I}}

\journal{Parallel Computing}

\begin{document}

\begin{frontmatter}

\title{Periodic homogenization using the Lippmann--Schwinger formalism}

\author[adr:1]{S\'ebastien Brisard}
\ead{sebastien.brisard@ifsttar.fr}
\ead[url]{http://navier.enpc.fr/BRISARD-Sebastien}

\author[adr:1,adr:2]{Fr\'ed\'eric Legoll\corref{cor:1}}
\ead{legoll@lami.enpc.fr}
\ead[url]{http://navier.enpc.fr/LEGOLL-Frederic}

\cortext[cor:1]{Corresponding author}
\address[adr:1]{Universit\'e Paris-Est, Laboratoire Navier (UMR 8205), CNRS, ENPC, IFSTTAR, 77455 Marne-la-Vall\'ee, France}
\address[adr:2]{INRIA Rocquencourt, MATHERIALS research-team, 78153 Le Chesnay Cedex, France}

\begin{abstract}
When homogenizing elliptic partial differential equations, the so-called corrector problem is pivotal to compute the macroscale effective coefficients from the microscale information. To solve this corrector problem in the periodic setting, Moulinec and Suquet introduced in the mid-nineties a numerical strategy based on the reformulation of that problem as an integral equation (known as the Lippmann--Schwinger equation), which is then suitably discretized. This results in an iterative, matrix-free method, which is of particular interest for complex microstructures. Since the seminal work of Moulinec and Suquet, several variants of their scheme have been proposed.

The aim of this contribution is twofold. First, we provide an overview of these methods, recast in the language of the applied mathematics community. These methods are presented as asymptotically consistent Galerkin discretizations of the Lippmann--Schwinger equation. The bilinear form arising in the weak form of this integral equation is indeed the sum of a local and a non-local term. We show that most of the variants proposed in the literature correspond to alternative approximations of this non-local term. Second, we propose a mathematical analysis of the discretized problem. In particular, we prove under mild hypotheses the convergence of these numerical schemes with respect to the grid-size. We also provide a priori error estimates on the solution.

The article closes on a three-dimensional numerical application within the framework of linear elasticity.
\end{abstract}

\begin{keyword}
Periodic homogenization \sep Corrector equation \sep Integral equation \sep Matrix-free method \sep FFT \sep Parallel iterative linear solvers
\end{keyword}

\end{frontmatter}

\section{Introduction}
\label{sec:intro}

When homogenizing elliptic partial differential equations (PDEs), the so-called corrector problem is pivotal to compute the macroscale effective coefficients from the microscale information. In this article, we are concerned with a numerical strategy, alternative to finite element methods, to solve that corrector problem in the periodic setting. That alternative strategy was proposed in the computational mechanics community in the 1990s, and our aim here is to review that strategy and its variant in the language of the applied mathematics community. 

\medskip

The highly oscillatory problem we start from reads (see Section~\ref{sec:base_hom} for more details)
\begin{equation}
\label{eq:pb_eps_intro}
-\div\left[\Aper\left( \frac{x}{\eps} \right) \nabla u^\eps \right] = f\quad\text{in $\Omega$},
\quad
u^\eps = 0\quad\text{on $\partial \Omega$},
\end{equation}
on a bounded domain $\Omega \subset \RR^d$, where the matrix $\Aper$ is assumed to be bounded, coercive, symmetric and periodic. Problem~\eqref{eq:pb_eps_intro} typically models the conduction of heat in a heterogeneous material (in that case, $u^\eps$ is the temperature and $\dps\Aper\left( \frac{x}{\eps} \right)$ is the heat conduction matrix), or models the Darcy law in subsurface flows (in that case, $u^\eps$ is the pressure and $\dps A_{\rm per}\left( \frac{x}{\eps} \right)$ is the materials permeability). We refer to~\cite[Chapter 1]{hornung} for a short non technical overview of related problems. In any case, the material is supposed to be heterogeneous, with physical or mechanical properties varying at the small scale $\eps$ (see Fig.~\ref{fig:1} below for a particular illustration in composite materials). Consequently, the solution $u^\eps$ to~\eqref{eq:pb_eps_intro} also varies at scale $\eps$. It is therefore challenging to numerically approximate. If one were to use a finite element discretization of~\eqref{eq:pb_eps_intro}, one would have to use elements with size of the order of $\eps$, which would be very expensive.

\medskip

Under the above assumptions, problem~\eqref{eq:pb_eps_intro} admits a homogenized limit when $\eps$ goes to 0, which is given by
\begin{equation}
  -\div\left[A^\star \nabla u^\star \right] = f \quad \text{in $\Omega$},
  \quad
  u^\star = 0\quad\text{ on $\partial \Omega$},
  \label{PB:homo_intro}
\end{equation}
where the homogenized matrix $A^\star$ is constant, and can be directly computed (see~\eqref{eq:def_Astar_corr} below) from the so-called corrector functions $w_p$, that solve, for any $p \in \RR^d$, the corrector problem
\begin{equation}
-\div\left[\Aper(p + \nabla w_p) \right] = 0 \quad \text{in $\RR^d$},
\qquad
w_p\text{ is $\ZZ^d$-periodic.}
\label{PB:correcteur_intro}
\end{equation}
Note that, in~\eqref{PB:homo_intro}, the matrix $A^\star$ is constant. Therefore problem~\eqref{PB:homo_intro} is easy to solve. The practical interest of the approach is obvious. No small scale $\eps$ is present in the homogenized problem~\eqref{PB:homo_intro}. At the price of only computing $d$ periodic problems~\eqref{PB:correcteur_intro} (i.e. as many problems as dimensions in the ambient space), the solution to problem~\eqref{eq:pb_eps_intro} can be efficiently approached for $\eps$ small. A direct attack of problem~\eqref{eq:pb_eps_intro} would require taking a meshsize smaller than $\varepsilon$. The difficulty has been circumvented. 

\medskip

In the sequel of this article, we focus on how to discretize the corrector problem~\eqref{PB:correcteur_intro}. An obvious approach is to use a finite element discretization. We describe here an alternative approach, introduced in the computational mechanics community by Moulinec and Suquet in~\cite{suquet1,suquet2}, which amounts to recasting~\eqref{PB:correcteur_intro} as an integral equation, the so-called Lippmann--Schwinger equation (see~\eqref{eq:4} below), and taking advantage of the periodic boundary conditions by using Fourier representations. We refer to Section~\ref{sec:base_LS} for more details.

Following the seminal work of Moulinec and Suquet~\cite{suquet1,suquet2}, several variants of the approach have been proposed, both within the framework of thermal conductivity (which is our framework here) and linear elasticity. In~\cite{EYRE1999}, Eyre and Milton introduced an \emph{accelerated} iterative scheme, combined with a multigrid approach. In~\cite{MICH2001}, Michel, Moulinec and Suquet analyzed the convergence of their seminal scheme and of accelerated iterative schemes. They also introduced a modified iterative scheme, based on an augmented Lagrangian approach, which allows for infinite contrast within the microstructure. More recently, Monchiet and Bonnet have proposed in~\cite{MONC2012} yet another iterative scheme, which combines (within the framework of linear elasticity) the Green operator for strains and the Green operator for stresses.

In the above-cited works, an ad-hoc iterative scheme is first proposed to solve the Lippmann--Schwinger equation (e.g. a fixed point algorithm). Next, the equation is discretized in space. However, it has been recognized that it may be more interesting to first discretize the Lippmann--Schwinger equation, thereby obtaining a linear system, and to next solve this linear system using standard iterative linear solvers, such as conjugate gradient~\cite{BRIS2010A, ZEMA2010, GELE2013} or SYMMLQ~\cite{BRIS2012A} solvers. 

We note that the extension of this approach (which is based on the reformulation of the corrector problem~\eqref{PB:correcteur_intro} as the integral equation~\eqref{eq:4}) to more complex behaviors has been also studied in the literature. We refer to~\cite{MICH2001, VINO2008, GELE2013}, \cite{MICH2001, LEBE2011} and~\cite{LEBE2012} for examples in non-linear elasticity, plasticity and elasto-viscoplasticity, respectively. 

\medskip

While a fair amount of work has been devoted to studying and improving the convergence of the various iterative schemes (for a \emph{fixed} value of the spatial discretization parameter), convergence with respect to the discretization parameter itself (regardless of the actual iterative solver) has hardly been approached. In the recent paper~\cite{BRIS2012A}, one of the authors has shown that all the schemes mentioned above can be seen in a unified framework as asymptotically consistent Galerkin discretizations of the Lippmann--Schwinger equation. Convergence analysis can therefore readily be studied, using standard tools such as C\'ea lemma (if the problem is coercive) or, more generally, the Banach-Necas-Babuska (BNB) theorem. The work~\cite{BRIS2012A} helps to draw a clear separation between (i) spatial discretization of the continuous Lippmann--Schwinger equation on the one hand, and (ii) resolution of the resulting linear system on the other hand, by means of standard Krylov subspace methods or ad-hoc solvers (such as the fixed-point scheme of~\cite{suquet1, suquet2} or the accelerated iterations of~\cite{EYRE1999}). 

The Lippmann--Schwinger equation is written in terms of a so-called continuous Green operator, denoted $\green_0$ in~\eqref{eq:4} below. Several asymptotically consistent approximations of $\green_0$, called discrete Green operators, can be introduced, which are much more amenable to practical evaluation and implementation. As explained in~\cite{BRIS2012A}, all the above mentioned schemes can be seen as the combination of a specific discrete Green operator and a specific linear solver. This classification opens the way to the implementation of the various approaches within an object-oriented framework. In our code, various types of physics (including scalar problems, such as thermal conductivity or electrical response, vectorial problems such as linear elasticity, \dots, in any spatial dimension) as well as various discretizations of the continuous Green operator $\green_0$ can readily be combined with various linear solvers.

\bigskip

The aim of this contribution is to review the numerical schemes discussed above within the Galerkin framework and the terminology introduced in~\cite{BRIS2012A}. We also extended the mathematical analysis that was carried out in~\cite{BRIS2012A} to more general cases. The analysis in~\cite{BRIS2012A} focuses on isotropic linear elastic materials. Here, we do not make any assumption concerning isotropy. Furthermore, we provide some error bounds on the approach (see Theorem~\ref{theo:error} below) in terms of the spatial discretization parameter.

A natural question is the comparison of the Lippmann--Schwinger approach with an approach that directly attacks~\eqref{PB:correcteur_intro}, e.g. using a finite element discretization. To the best of our knowledge, such a comparison has not been carried out. Note that, to be fair, the present FFT-based approach should be compared with a \emph{specific} implementation of the finite element method taking advantage of the grid-like nature of the mesh. More precisely, neither the geometry of the mesh (coordinates of nodes, connectivity, \dots), nor the global stiffness matrix need to be stored. This results in a matrix-free approach, which can be handled by iterative linear solvers. The so-called element-by-element (EbE) method~\cite{BARR1988, CARE1988, TERA1997, ARBE2008} would be a potential match to FFT-based methods. Such a comparison is out of the scope of this contribution.

\medskip

This article is organized as follows. In Section~\ref{sec:basics}, we begin with a brief introduction to periodic homogenization theory, before turning to the Lippmann-Schwinger formalism and the reformulation of the corrector equation. In Section~\ref{sec:disc}, we discuss the numerical discretization of the Lippmann--Schwinger equation. Section~\ref{sec:math} is devoted to the mathematical analysis of the continuous and discrete problems. We also establish there error bounds. In Section~\ref{sec:num}, we finally turn to a numerical illustration of the approach on a three-dimensional problem.

\section{Periodic homogenization and the Lippmann--Schwinger equation}
\label{sec:basics}

This section is devoted to first introducing the basic setting of periodic homogenization (see Section~\ref{sec:base_hom}). There is of course no novelty in such an introduction, the only purpose of which is the consistency of the contribution and the convenience of the reader not familiar with the theory. We refer to e.g. the monographs~\cite{blp,cd,jikov}  for more details on homogenization theory and to~\cite[Chapters 1 and 2]{allaire-book} for a pedagogic presentation. We next introduce the Lippmann--Schwinger formalism in Section~\ref{sec:base_LS}. 

Consider the highly oscillatory equation
\begin{equation}
\label{eq:pb_eps}
-\div\left[\Aper\left( \frac{x}{\eps} \right) \nabla u^\eps \right] = f\quad\text{in $\Omega$},
\quad
u^\eps = 0\quad\text{on $\partial \Omega$},
\end{equation}
on a bounded domain $\Omega \subset \RR^d$, with $f \in L^2(\Omega)$. In the above equation, $\Aper \in \left( L^\infty(\RR^d) \right)^{d \times d}$ is a matrix that we assume to be $\ZZ^d$ periodic, i.e.
\begin{equation*}
\forall k \in \ZZ^d, \quad \Aper(x+k) = \Aper(x) \quad \text{a.e. on $\RR^d$},
\end{equation*}
and uniformly coercive: there exists a constant $a_- > 0$ such that
\begin{equation}
\label{eq:hyp1}
\forall \xi \in \RR^d, \quad a_- |\xi|^2 \leq \xi^T \Aper(x) \xi\quad\text{a.e. on $\RR^d$}.
\end{equation}
Hereafter, we assume $\Aper$ to be a symmetric matrix. This is not a restriction with respect to the applications we have in mind, for which the PDE of interest is the Euler-Lagrange equation associated to some physical energy.

\subsection{Periodic homogenization theory}
\label{sec:base_hom}

Due to the periodicity assumption on $\Aper$, problem~\eqref{eq:pb_eps} admits an explicit homogenized limit when $\eps$ goes to 0. It is indeed well-known that the solution $u^\eps$ to~\eqref{eq:pb_eps} converges weakly in $H^1_0(\Omega)$ as $\eps \rightarrow 0$ to $u^\star$, solution to the homogenized equation
\begin{equation}
  -\div\left[A^\star \nabla u^\star \right] = f \quad \text{in $\Omega$},
  \quad
  u^\star = 0\quad\text{ on $\partial \Omega$}.
  \label{PB:homo}
\end{equation}
The homogenized matrix $A^\star$ is constant, and such that
\begin{equation}
\label{eq:def_Astar_corr}
\forall p \in \RR^d, \quad A^\star p = \int_{Q}\Aper(y) (p + \nabla w_p(y)) \,dy,
\qquad
Q=(0,1)^d,
\end{equation}
where, for any $p \in \RR^d$, $w_p$ is the unique (up to the addition of a constant) solution to the corrector problem
\begin{equation}
-\div\left[\Aper(p + \nabla w_p) \right] = 0 \quad \text{in $\RR^d$},
\qquad
w_p\text{ is $\ZZ^d$-periodic.}
\label{PB:correcteur}
\end{equation}

The above convergence result can be established using various techniques. One possible approach is the \emph{energy method} (i.e. the \emph{method of oscillating test functions}) introduced by Murat and Tartar (see~\cite{murat,tartar}), where one uses test functions in~\eqref{eq:pb_eps} of the form $\dps \varphi(x) + \eps \sum_{i=1}^d w_{e_i}\left(\frac{x}{\eps}\right)\, \frac{\partial \varphi}{\partial x_i}(x)$, for any $\varphi \in C^\infty_c(\Omega)$. Another possible approach is to use the notion of \emph{two-scale convergence} introduced by Nguetseng and developed by Allaire (see~\cite{allaire,nguetseng}). We refer to the bibliography for more details on both approaches.

The convergence of $u^\eps$ to $u^\star$ holds weakly in $H^1_0(\Omega)$, and strongly in $L^2(\Omega)$. The correctors $w_{e_i}$, $1 \leq i \leq d$, may also be used to {\em correct} $u^\star$ in order to identify the behavior of $u^\eps$ in the strong $H^1(\Omega)$ norm. We more precisely have
\begin{equation*}
\lim_{\eps \to 0} \left\|
u^{\varepsilon} - u^\star - \eps \sum_{i=1}^d w_{e_i}\left(\frac{\cdot}{\varepsilon}\right)\frac{\partial u^\star}{\partial x_i} \right\|_{H^1(\Omega)} = 0.
\end{equation*}
Under some regularity assumptions on $\Aper$ and $u^\star$, a sharp rate (in terms of $\eps$) can be established for the above convergence (see e.g.~\cite[p. 28]{jikov} or~\cite[Theorem 2.1]{amar}). Several other convergences on various products involving $\dps A_{\rm per}\left(\frac{x}{\eps} \right)$ and $u^\varepsilon$ also hold. All this is well documented.

\medskip

As pointed out above, in the sequel of this article, we focus on how to discretize the corrector problem~\eqref{PB:correcteur}. We focus here of an approach introduced in the computational mechanics community by Moulinec and Suquet in~\cite{suquet1,suquet2}, which amounts to recasting~\eqref{PB:correcteur} as an integral equation, the so-called Lippmann--Schwinger equation, and taking advantage of the periodic boundary conditions by using Fourier representations. 

\begin{remark}
\label{rem:random}
Note that the problem~\eqref{PB:correcteur} not only appears in periodic homogenization, but also in stochastic homogenization. In that case, the corrector equation, which plays the role of~\eqref{PB:correcteur}, is set on the {\em entire} space $\RR^d$. To approximate it, a standard procedure is to consider a {\em truncated} version of the stochastic corrector problem on the bounded domain $Q_N = (-N,N)^d$, complemented with e.g. periodic boundary conditions. The approximate corrector problem then reads
\begin{equation}
  \left\{
    \begin{aligned}
      &-\div\left[A(\cdot,\omega) \left(p + \nabla w_p^N(\cdot,\omega)\right) \right] = 0 \quad \text{in $\RR^d$, almost surely,}\\
      &w_p^N(\cdot,\omega)\text{ is $Q_N$-periodic, almost surely,}
    \end{aligned}
  \right.
  \label{PB:correcteur_N}
\end{equation}
which is of the same form as~\eqref{PB:correcteur}. We refer e.g. to~\cite{singapour} and the comprehensive bibliography therein for more details on stochastic homogenization. The approach we describe below can readily be extended to handle the case of~\eqref{PB:correcteur_N}.
\end{remark}

\subsection{The Lippmann--Schwinger equation}
\label{sec:base_LS}

We now recast the corrector problem~\eqref{PB:correcteur} as an integral equation, the so-called Lippmann--Schwinger equation. We first introduce the space $\polarizations = \left( L^2_{\rm per}(\RR^d) \right)^d$, that is
\begin{equation*}
\polarizations = \left\{ \polarization \colon \RR^d \to \RR^d, \quad \text{$\tau$ is $\ZZ^d$-periodic}, \quad \tau \in \left( L^2(\unitcell) \right)^d
\right\}, \qquad Q=(0,1)^d.
\end{equation*}

\subsubsection{Green function and the second-rank Green operator}

Let $A_0\in\RR^{d\times d}$ be a constant, symmetric, positive definite matrix, and $G_0$ be the Green function of the operator $L_0 = -\div\left(A_0\nabla \cdot\right)$ with periodic boundary conditions on $Q$, that is the function $G_0 : Q \to \RR$ (uniquely defined up to an additive constant) such that
\begin{equation*}
-\div \left(A_0\nabla G_0 \right)= \delta_0 - \frac{1}{|\unitcell|},
\qquad
G_0 \text{ is $\ZZ^d$-periodic}.
\end{equation*}
Note that the above right-hand side is of mean zero, and the above equation is well-posed. For any $\polarization \in \polarizations$, consider the problem
\begin{equation}
\label{eq:1}
-\div\left(A_0\nabla u+\polarization\right)=0\quad\text{in $\RR^d$},
\qquad
u\text{ is $\ZZ^d$-periodic}.
\end{equation}
We have
\begin{equation*}
\nabla u(x) = -\int_\unitcell \green_0(x-y) \, \polarization(y) \, dy,
\end{equation*}
where $\green_0 = - \nabla^2 G_0$ (see also~\citep{MILT2002}). In the sequel, we use the following notation:
\begin{equation*}
  \green_0 \ast \polarization(x) :=\int_\unitcell\green_0(x-y) \, \polarization(y) \, dy,
\end{equation*}
so that the solution to~\eqref{eq:1} satisfies $\nabla u=-\green_0\ast\polarization$, and $\Gamma_0$ can also be considered as an operator, the so-called second-rank Green operator associated to $A_0$.

\medskip

Taking advantage of the periodic boundary conditions in~\eqref{eq:1}, the above equation reads, in Fourier space (see~\citep{MILT2002}),
\begin{equation}
  \label{eq:2}
\green_0\ast\polarization(x)=\sum_{k\in\ZZ^d}\hat\green_0(k) \ \hat\polarization(k) \ \exp\left(2 i\pi k^Tx\right),
\end{equation}
where the Fourier coefficients $\hat\polarization(k)$ of $\polarization$ are given by
\begin{equation*}
  \hat\polarization(k)=\int_\unitcell\polarization(x)\exp\left(-2 i \pi k^Tx\right)dx
\end{equation*}
and likewise for $\hat{\green}_0(k)$. By definition of $\green_0$, the matrix $\hat{\green}_0(k)$ is given by
\begin{equation*}
  \hat\green_0(k)=
  \begin{cases}
    \left(k^TA_0k\right)^{-1}kk^T & \text{if $k\in\ZZ^d\setminus\{0\}$},\\
    0 & \text{ if $k=0$}.
  \end{cases}
\end{equation*}

\subsubsection{Lippmann--Schwinger equation}

We now go back to the corrector problem~\eqref{PB:correcteur} and recast it as an integral equation. To begin with, we select a constant, symmetric, positive definite matrix $A_0$ such that, appart from the domain (which can be empty) where $\Aper(x) = A_0$, we have that $\Aper(x) - A_0$ is bounded away from 0. More precisely, we assume that there exists $Q_0 \subset Q$ (with possibly $Q_0 = Q$) and $c_0 > 0$ such that
\begin{gather}
\label{eq:hyp2}
\begin{cases}
\Aper(x) = A_0 \quad \text{a.e. on $Q \setminus Q_0$},
\\
\forall \xi \in \RR^d, \quad c_0 |\xi|^2 \leq \left| \xi^T \left( \Aper(x) - A_0 \right) \xi \right| \quad \text{a.e. on $Q_0$.}
\end{cases}
\end{gather}
We introduce the auxiliary unknown $\polarization_p$ (called the polarization), defined as
\begin{equation}
  \label{eq:3}
  \polarization_p := \left(\Aper-A_0\right)\left(p+\nabla w_p\right)
\end{equation}
and infer from~\eqref{PB:correcteur} that
\begin{equation*}
-\div\left[A_0\left(p+\nabla w_p\right)+\polarization_p\right]=0.
\end{equation*}
Since $A_0 p$ is constant, we have
\begin{equation*}
-\div\left[A_0 \nabla w_p +\polarization_p\right]=0, \quad w_p \text{ is $\ZZ^d$-periodic},
\end{equation*}
and therefore, by definition of $\green_0$ (see~\eqref{eq:1}), we have $\nabla w_p=-\green_0\ast\polarization_p$. Using the definition~\eqref{eq:3} of $\polarization_p$, we get
\begin{gather}
  \label{eq:4}
\begin{cases}
  \left(\Aper-A_0\right)^{-1}\polarization_p+\green_0\ast\polarization_p =p \quad \text{a.e. on $Q_0$},\\
\polarization_p =0 \quad \text{a.e. on $Q \setminus Q_0$},\\
\polarization_p \in V.
\end{cases}
\end{gather}
In addition, for any $p \in \RR^d$, we have
\begin{equation}
\label{eq:tau_Astar}
A^\star p = \int_Q \Aper(p+\nabla w_p) = A_0 p + \int_Q \polarization_p.
\end{equation}
In view of~\eqref{eq:4}, it is natural to introduce the space
\begin{equation*}
\polarizations_0 = \left\{ \tau \in \polarizations, \quad \tau = 0 \ \text{ a.e. on $Q \setminus Q_0$ } \right\}.
\end{equation*}
By analogy with the quantum theory of scattering~\citep{LIPP1950}, problem~\eqref{eq:4} is usually referred to as the Lippmann--Schwinger equation~\citep{KORR1973, ZELL1973, KRON1974, NEMA1982}. We have the following result:
\begin{lem}
\label{lem:equiv}
Under Assumption~\eqref{eq:hyp2}, problem~\eqref{PB:correcteur} is equivalent to problem~\eqref{eq:4}.
\end{lem}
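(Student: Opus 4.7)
The plan is to prove both directions of the equivalence by using the polarization $\tau_p$ defined in~\eqref{eq:3} as the bridge between the two formulations. A preliminary observation is needed: Assumption~\eqref{eq:hyp2} implies that, a.e.\ on $Q_0$, the symmetric matrix $\Aper(x)-A_0$ cannot have eigenvalues of both signs (otherwise $\xi^T(\Aper-A_0)\xi$ would vanish for some $\xi\neq 0$, contradicting the lower bound $c_0|\xi|^2$). Hence $\Aper-A_0$ is either uniformly positive or uniformly negative definite on $Q_0$, with the absolute value of every eigenvalue bounded below by $c_0$, so $(\Aper-A_0)^{-1}$ exists a.e.\ on $Q_0$ and is uniformly bounded. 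This makes the first line of~\eqref{eq:4} well-defined in $L^2(Q_0)$.

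For the direction \eqref{PB:correcteur} $\Rightarrow$ \eqref{eq:4}, I would take $w_p$ solving~\eqref{PB:correcteur} and define $\tau_p$ by~\eqref{eq:3}. Then $\tau_p\in V$ and, because $\Aper=A_0$ a.e.\ on $Q\setminus Q_0$, we automatically have $\tau_p=0$ there, so $\tau_p\in V_0$. The chain of identities already displayed in the text between~\eqref{eq:3} and~\eqref{eq:4} shows that $\nabla w_p=-\green_0\ast\tau_p$. On $Q_0$ I can apply $(\Aper-A_0)^{-1}$ to the definition of $\tau_p$ to obtain $(\Aper-A_0)^{-1}\tau_p=p+\nabla w_p=p-\green_0\ast\tau_p$, which is exactly the first line of~\eqref{eq:4}.

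For the converse direction \eqref{eq:4} $\Rightarrow$ \eqref{PB:correcteur}, given $\tau_p$ solving the Lippmann--Schwinger equation, I would construct $w_p$ as the unique (up to an additive constant) $\ZZ^d$-periodic solution of the auxiliary problem
\begin{equation*}
-\div\left(A_0\nabla w_p+\tau_p\right)=0\quad\text{in }\RR^d,
\end{equation*}
which is well-posed because $A_0$ is symmetric positive definite and $\tau_p\in V$. By the definition of $\green_0$ (cf.~\eqref{eq:1}) this solution satisfies $\nabla w_p=-\green_0\ast\tau_p$. Now I verify that $\tau_p=(\Aper-A_0)(p+\nabla w_p)$ everywhere: on $Q_0$ the first line of~\eqref{eq:4} gives $(\Aper-A_0)^{-1}\tau_p=p-\green_0\ast\tau_p=p+\nabla w_p$, and on $Q\setminus Q_0$ both $\tau_p$ and $(\Aper-A_0)(p+\nabla w_p)$ vanish (the latter because $\Aper=A_0$ there). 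Substituting this identity into the auxiliary equation yields $-\div[\Aper(p+\nabla w_p)]=0$, i.e.~\eqref{PB:correcteur}.

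The argument is essentially a bookkeeping exercise, and I do not expect a serious obstacle; the only delicate point is the preliminary check that Assumption~\eqref{eq:hyp2} actually guarantees the pointwise invertibility of $\Aper-A_0$ on $Q_0$ with a uniform bound, since the hypothesis is phrased as a two-sided estimate $c_0|\xi|^2\le|\xi^T(\Aper-A_0)\xi|$ rather than as definiteness. Once that is established, both implications reduce to the same algebraic manipulation of the polarization identity~\eqref{eq:3}.
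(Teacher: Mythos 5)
Your argument is correct and follows essentially the same route as the paper: define $\tau_p$ from $w_p$ via~\eqref{eq:3} for the forward implication, and for the converse reconstruct $w_p$ as the $\ZZ^d$-periodic solution of $-\div\left(A_0\nabla w_p+\tau_p\right)=0$, then recover the polarization identity on $Q_0$ from the first line of~\eqref{eq:4} and extend it to all of $Q$ via the second line together with $\Aper=A_0$ on $Q\setminus Q_0$. Your preliminary invertibility observation is a welcome clarification that the paper leaves implicit; just note that~\eqref{eq:hyp2} forces $\Aper(x)-A_0$ to be definite at a.e.\ fixed $x$ (with sign possibly varying in $x$), not uniformly of one sign across all of $Q_0$, though only the pointwise invertibility with the uniform bound $\left\|(\Aper-A_0)^{-1}\right\|_{L^\infty(Q_0)}\le c_0^{-1}$ is actually used.
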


\begin{proof}
We have already shown that, if $w_p$ is a solution to~\eqref{PB:correcteur}, then $\polarization_p = \left(\Aper-A_0\right)\left(p+\nabla w_p\right)$ is a solution to~\eqref{eq:4}. We now prove the converse statement. Let $\polarization_p$ be a solution to~\eqref{eq:4}. Define $w_p$ up to an additive constant by $\nabla w_p = - \green_0 \ast \tau_p$, i.e. $w_p$ is the (unique up to an additive constant) solution to
\begin{equation}
\label{eq:titi}
-\div\left(A_0\nabla w_p+\polarization_p\right)=0\quad\text{in $\RR^d$},
\qquad
w_p\text{ is $\ZZ^d$-periodic}.
\end{equation}
We have from the first line of~\eqref{eq:4} that
\begin{equation*}
\left(\Aper-A_0\right)^{-1}\polarization_p 
=
p - \green_0 \ast \tau_p
=
p + \nabla w_p \quad \text{a.e. on $Q_0$},
\end{equation*}
hence
\begin{equation*}
\polarization_p = \left(\Aper-A_0\right) \left( p + \nabla w_p \right) \quad \text{a.e. on $Q_0$}.
\end{equation*}
In view of the second line of~\eqref{eq:4}, we see that the above relation is actually true on $Q$. Collecting this relation with~\eqref{eq:titi}, we deduce that $w_p$ is a solution to~\eqref{PB:correcteur}.
\end{proof}

\begin{remark}
After discretization, problem~\eqref{eq:4} may be more or less difficult to solve, depending on the choice of $A_0$. Indeed, the condition number of the matrix of the linear system associated to~\eqref{eq:4} depends on $A_0$. The practical performance (for various choices of $A_0$) of the approach described here has been investigated in e.g.~\cite{MICH2001,BRIS2010A}.
\end{remark}

The present approach is motivated by the fact that a problem of type~\eqref{eq:1}, with a {\em constant} matrix $A_0$, is easier to solve than a problem of type~\eqref{PB:correcteur}. In addition, the Lippmann--Schwinger equation~\eqref{eq:4} can be discretized in the form of a {\em matrix-free} problem. See end of Section~\ref{sec:galerkin}.

\section{Numerical discretization of the Lippmann--Schwinger equation}
\label{sec:disc}

In this section, we present a Galerkin discretization of the Lippmann--Schwinger equation~\eqref{eq:4}, and detail the approach down to the identification of the linear system to solve (see Section~\ref{sec:galerkin}). We next discuss practical approximations of the bilinear form (see Section~\ref{sec:approx}) before briefly turning to implementation details in Section~\ref{sec:implem}. To simplify the notation, we omit the dependency of $\tau_p$ with respect to $p$.

\medskip

The weak form of~\eqref{eq:4} is (see~\citep{BRIS2012A})
\begin{equation}
  \label{eq:5}
  \text{Find $\trial\in\polarizations_0$ such that, for all $\test\in\polarizations_0$,} \quad a(\trial, \test)=\int_\unitcellzero \test^Tp,
\end{equation}
where $a$ is the following symmetric bilinear form:
\begin{align}
\label{eq:def_a}
a(\trial, \test)&=\int_\unitcellzero \test^T\left(\Aper-A_0\right)^{-1}\trial+\int_\unitcellzero \test^T\left(\green_0\ast\trial\right)
\\
\nonumber
  &=\int_\unitcellzero \test^T\left(\Aper-A_0\right)^{-1}\trial+\sum_{k\in\ZZ^d} \left(\overline{\hat{\test}(k)}\right)^T \, \hat\green_0(k) \, \hat\trial(k),
\end{align}
where Parseval's identity has been used in the second line and $\left(\overline{\hat{\test}(k)}\right)^T$ is the transpose of the complex conjugate of the vector $\test(k)$. Problem~\eqref{eq:5} is of course well-posed, due to its equivalence to~\eqref{eq:4}, Lemma~\ref{lem:equiv} and the well-posedness of~\eqref{PB:correcteur}. However, as a preparation for the analysis of its discretized counterpart that we describe below, it is also interesting to directly analyze~\eqref{eq:5}. This is the object of Section~\ref{sec:math1} below.

\subsection{Galerkin approximation}
\label{sec:galerkin}

To discretize~\eqref{eq:5}, we follow a standard Galerkin procedure and introduce the problem
\begin{equation}
  \label{eq:6}
  \text{Find $\trial^h\in\approximations_0$ such that, for all $\test^h\in\approximations_0$,} \quad a(\trial^h, \test^h)=\int_\unitcellzero \left(\test^h\right)^Tp,
\end{equation}
where
\begin{equation*}
\approximations_0 = \left\{ \trial^h \in \approximations, \quad \trial^h = 0 \ \text{ a.e. on $Q \setminus Q_0$ } \right\},
\end{equation*}
and where the approximation space $\approximations$ is defined as the space of vector-valued functions $\trial^h \in V$ that are constant over each cell $\cell{\beta}$,
\begin{equation*}
\cell{\beta}
=
h \Big( \beta + \unitcell \Big)
=
\left[\beta_1h,(\beta_1+1)h\right]\times\cdots\times\left[\beta_dh,(\beta_d+1)h\right], \quad \beta\in\ZZ^d,
\end{equation*}
where $h=1/N$ is the size of the cells. The total number of cells in the unit-cell $\unitcell$ is $N^d$. For future use, we introduce
\begin{equation*}
\I = \left\{ \beta \in \ZZ^d, \quad \cell{\beta} \subset \unitcell \right\},
\qquad
\Izero = \left\{ \beta \in \ZZ^d, \quad \cell{\beta} \subset \unitcellzero \right\}.
\end{equation*}
Remark that the discretization of $\unitcell$ into cells $\cell{\beta}$ does not have to be consistent with the subset $\unitcellzero \subset \unitcell$, i.e. we do not require cells $\cell{\beta}$ to be subsets of either $\unitcellzero$ or $\unitcell \setminus \unitcellzero$. As soon as $\cell{\beta} \not\subset \unitcellzero$, i.e. $\beta \notin \Izero$, we have $\trial^h = 0$ on $\cell{\beta}$.

\medskip

In what follows, we provide explicit expressions for the left and right hand sides of~\eqref{eq:6}. For any $\trial^h\in\approximations$ and $\beta\in \I$, let $\trial_\beta^h$ denote the constant value of $\trial^h$ over the cell $\cell{\beta}$. Since $\trial^h$ is $\ZZ^d$-periodic, $\trial_\beta^h$ is $N\ZZ^d$-periodic. The discrete variational problem~\eqref{eq:6} reduces to a linear system with unknowns $\trial_{\beta}^h$, $\beta\in\Izero$, the identification of which requires the evaluation of the bilinear form $a$ over $\approximations_0 \times \approximations_0$, as well as the identification of the linear form in the right-hand side of~\eqref{eq:6} over $\approximations_0$. 

For the first term of $a$, we have
\begin{equation}
\label{eq:7}
\int_\unitcellzero \left(\test^h\right)^T\left(\Aper-A_0\right)^{-1}\trial^h
=
h^d \sum_{\beta \in \Izero} \left(\test_\beta^h\right)^T\left(A_\beta^h-A_0\right)^{-1}\trial_\beta^h
\end{equation}
where $A_\beta^h$ is defined by
\begin{equation}
\label{eq:def_ah}
\left(A_\beta^h-A_0\right)^{-1}=h^{-d}\int_{\cell{\beta}}\left(\Aper-A_0\right)^{-1}.
\end{equation}
For the second term of $a$, straightforward calculations lead to the following expression (see~\cite{BRIS2010A}):
\begin{equation}
\label{eq:8}
\forall \trial^h, \test^h\in\approximations_0, \quad
\int_\unitcellzero \left(\test^h\right)^T\left(\green_0\ast\trial^h\right)
=
\int_\unitcell \left(\test^h\right)^T\left(\green_0\ast\trial^h\right)
=
\frac{1}{N^{2d}} \sumvoxels{k} \left(\overline{\hat\test_k^h}\right)^T \, \green_{0,k}^{h, \text{cons}} \ \hat\trial_k^h.
\end{equation}
In the above formula, $(\hat\test_k^h)_{k \in \ZZ^d}$ is the discrete Fourier transform (DFT) of the $N\ZZ^d$-periodic function $\beta \mapsto \test_\beta^h$, i.e. $\hat\test_k^h$ is the Fourier coefficient of $\test_\beta^h$:
\begin{equation*}
  \hat\test_k^h=\DFT_k\left[ \test^h \right]=\sumvoxels{\beta}\test_\beta^h \, \exp\left(-2i\pi\frac{\beta^Tk}N\right),
\end{equation*}
and likewise for $\hat\trial_k^h$, while $\hat\green_{0, k}^{h,\text{cons}}$ denotes the so-called \emph{consistent discrete Green operator}
\begin{equation}
\label{eq:11}
\hat\green_{0, k}^{h, \text{cons}}
=\sum_{n\in\ZZ^d} \left( F\left[2\pi\left(\frac{k}{N}+n\right)\right] \right)^2 \, \hat\green_0\left(\frac{k}{N}+n\right)\quad\text{with}\quad F(K)=\frac{\sin(K_1/2)}{K_1/2}\cdots\frac{\sin(K_d/2)}{K_d/2}\quad \text{for any $K\in\RR^d$}.
\end{equation}
We observe that $\left(\hat\green_{0, k}^{h, \text{cons}}\right)_{k\in\ZZ^d}$ as defined above is $N\ZZ^d$-periodic. Therefore $\left(\hat\green_{0, k}^{h, \text{cons}} \ \hat\trial_k^h\right)_{k\in\ZZ^d}$ is also $N\ZZ^d$-periodic. Using Plancherel's identity, we recast~\eqref{eq:8} as
\begin{equation}
\label{eq:10}
\int_\unitcellzero\left(\test^h\right)^T\left(\green_0\ast\trial^h\right)
=
h^d\sumvoxels{\beta} \left(\test_\beta^h\right)^T \ \DFT_\beta^{-1}\left[\left(\hat\green_{0, k}^{h, \text{cons}} \, \hat\trial_k^h\right)_{k\in\ZZ^d}\right].
\end{equation}
We now turn to the linear form in the right-hand side of~\eqref{eq:6}, which reads
\begin{equation}
  \label{eq:9}
\forall \test^h \in \approximations_0, \quad
\int_\unitcellzero \left(\test^h\right)^Tp
=
\int_\unitcell \left(\test^h\right)^Tp
=
h^d\sumvoxels{\beta}p^T\test_\beta^h.
\end{equation}
Collecting~\eqref{eq:def_a}, \eqref{eq:7}, \eqref{eq:10} and~\eqref{eq:9}, we obtain the equivalent formulation of~\eqref{eq:6}:
\begin{multline*}
\text{Find $\left(\trial_\beta^h\right)_{\beta\in\Izero}$ such that, for any $\left(\test_\beta^h\right)_{\beta\in\Izero}$,} 
\\
\sum_{\beta \in \Izero} \left( \test_\beta^h \right)^T \left(A_\beta^h-A_0\right)^{-1} \trial_\beta^h
+
\sumvoxels{\beta} \left( \test_\beta^h \right)^T \ \DFT_\beta^{-1}\left[\left(\hat\green_{0, k}^{h, \text{cons}} \, \hat\trial_k^h\right)_{k\in\ZZ^d}\right]
= \sumvoxels{\beta}\left(\test_\beta^h\right)^Tp.
\end{multline*}
Therefore, the linear system to solve is
\begin{equation}
  \label{eq:13}
\forall \beta \in \Izero, \quad
  \left(A_\beta^h-A_0\right)^{-1}\trial_\beta^h+\DFT_\beta^{-1}\left[\left(\hat\green_{0, k}^{h, \text{cons}} \ \hat\trial_k^h\right)_{k\in\ZZ^d}\right]=p.
\end{equation}
Using the circular convolution theorem, we recast the above equation as
\begin{equation}
  \label{eq:12}
\forall \beta \in \Izero, \quad
  \left(A_\beta^h-A_0\right)^{-1}\trial_\beta^h+\sum_{\gamma\in\cellindices}\green_{0, \beta-\gamma}^{h, \text{cons}}\trial_\gamma^h=p,
\end{equation}
where $\left( \green_{0,\beta}^{h, \text{cons}} \right)_{\beta\in\ZZ^d}$ is the ($N\ZZ^d$-periodic) inverse discrete Fourier transform of $\left(\hat\green_{0,k}^{h, \text{cons}}\right)_{k\in\ZZ^d}$ defined by~\eqref{eq:11}. Equation~\eqref{eq:12} shows that $\green_{0,\beta-\gamma}^{h, \text{cons}}$ is the term coupling $\trial_\beta^h$ with $\trial_\gamma^h$.

\medskip

Note that, in~\eqref{eq:13}, the discrete convolution product between $\green_{0}^{h, \text{cons}}$ and $\trial^h$ is computed in the Fourier space, which is much more efficient than in the real space for large discretization grids. Furthermore, problem~\eqref{eq:13} can be implemented as a \emph{matrix-free} linear system, in combination with an iterative linear solver (see~\cite{BRIS2010A}).

\subsection{Asymptotically consistent approximations of the bilinear form}
\label{sec:approx}

We remark that the consistent, discrete Green operator~\eqref{eq:11} that appears in~\eqref{eq:13} is ill-suited to numerical implementation. Indeed, it is defined as a series which actually converges very slowly. This forbids its evaluation at each iteration of the linear solver. Thus, this operator must be stored in memory, which is not practical for large, 3D simulations (see~\cite{BRIS2010A} for examples of 2D applications).

However, it is well-known that the \emph{exact} evaluation of the bilinear form $a$ over $\approximations_0 \times \approximations_0$ is not mandatory to define a converging numerical discretization. Instead, an \emph{asymptotically consistent} approximation $a^h$ of $a$ can be introduced, following~\cite[Definition 2.15]{ERN2004}. We then introduce a further approximate discrete variational problem, which reads (compare with~\eqref{eq:6})
\begin{equation}
  \label{eq:16}
\text{Find $\trial^h\in\approximations_0$ such that, for all $\test^h\in\approximations_0$,} \quad a^h(\trial^h, \test^h)=\int_\unitcellzero \left(\test^h\right)^Tp.
\end{equation}
To preserve the structure (block-diagonal and block-circulant) of the linear system~\eqref{eq:13}, $a^h$ is often defined by
\begin{equation*}
a^h(\trial^h, \test^h)=
h^d \sum_{\beta \in \Izero} \left( \test_\beta^h \right)^T \left(A_\beta^h-A_0\right)^{-1} \trial_\beta^h
+
h^d \sumvoxels{\beta} \left( \test_\beta^h \right)^T \ \DFT_\beta^{-1}\left[\left(\hat\green_{0, k}^h \ \hat\trial_k^h\right)_{k\in\ZZ^d}\right]
\end{equation*}
which only differs from $a(\trial^h, \test^h)$ by the fact that we have replaced $\hat\green_{0, k}^{h, \text{cons}}$ by some $\hat\green_{0, k}^h$ in the second term (compare the above formula with~\eqref{eq:10}). We think of $\hat\green_{0,k}^h$ as a suitable approximation of the consistent, discrete Green operator $\hat\green_{0, k}^{h,\text{cons}}$. Several choices for this approximate discrete Green operator have been introduced in the literature, that we now review. A comparison of these choices is then provided. It should be noted that the terminology adopted here (namely truncated, filtered, finite-difference-based, finite-element-based Green operator) is ours.

\subsubsection{The truncated Green operator}

The first asymptotically consistent approximation of~\eqref{eq:5} was proposed by Moulinec and Suquet~\citep{suquet1, suquet2} who simply discarded the high-frequency terms in~\eqref{eq:2}. This results in the following definition of the truncated Green operator $\hat\green_{0,k}^{h,\text{trunc}}$: for even values of $N=2M$, 
\begin{equation}
\label{eq:14}
\forall n\in\ZZ^d, \quad \forall k \in \freqindiceseven, \quad \hat\green_{0,k+nN}^{h,\text{trunc}} := \hat\green_0(k),
\end{equation}
whereas, for odd values of $N=2M+1$,
\begin{equation}
\label{eq:15}
\forall n\in\ZZ^d, \quad \forall k \in \freqindicesodd, \quad \hat\green_{0,k+nN}^{h,\text{trunc}} := \hat\green_0(k).
\end{equation}
It has been shown in~\citep{BRIS2012A} that this discrete operator has the asymptotic consistency property as defined in~\citep[Definition 2.15]{ERN2004}.

\medskip

It should be noted that, when $N$ is even, Equation~\eqref{eq:14} does not define the discrete Fourier transform of \emph{real} numbers. Indeed, it can be verified that the symmetry property $\dps \hat\green_{0, k}^{h,\text{trunc}}=\overline{\hat\green_{0,N-k}^{h,\text{trunc}}}$ does \emph{not} hold as soon as one of the $k_i$'s is equal to $M$. For such values of the discrete frequency $k$, Moulinec and Suquet~\cite{suquet2} replace~\eqref{eq:14} with $\dps \hat\green_{0, k}^{h,\text{trunc}} := \left(A_0\right)^{-1}$.

\subsubsection{The filtered discrete Green operator}
\label{sec:filtered}

Instead of cutting-off the high-frequency terms in~\eqref{eq:2} as above, it can be advantageous to \emph{smoothly} filter them out. Such a filtering approach has been proposed in~\cite{BRIS2012A}, and results in the following approximation of $\hat\green_{0,k}^{h,\text{cons}}$, which should be compared with~\eqref{eq:11}:
\begin{equation*}
\hat\green_{0, k}^{h, \text{filt}} := \sum_{n\in\{-1,0\}^d} \left( G\left[2\pi \left( \frac{k}{N}+n \right)\right] \right)^2 \, \hat\green_0 \left( \frac{k}{N}+n\right)\quad\text{with}\quad G(K)=\cos\frac{K_1}4\cdots\cos\frac{K_d}4 \quad \text{for any $K\in\RR^d$}.
\end{equation*}

\subsubsection{Discrete Green operator based on a Finite Difference discretization of~\eqref{eq:1}}

A discrete Green operator based on a finite difference discretization of~\eqref{eq:1} has been introduced in~\cite{WILL2014}. Its expression is given by
\begin{equation*}
  \hat\green_{0, k}^{h, \text{FD}}=\frac{V_k \left(\overline{V_k} \right)^T}{\left(\overline{V_k} \right)^T A_0 V_k}\quad\text{with}\quad V_k=\left[\exp\left(\frac{2i\pi k_1}N\right)-1,\ldots,\exp\left(\frac{2i\pi k_d}N\right)-1\right]^T,
\end{equation*}
where $\left(\overline{V_k} \right)^T$ denotes the conjugate transpose of the vector $V_k$. 

\subsubsection{Discrete Green operator based on a Finite Element discretization of~\eqref{eq:1}}

We finally mention the approach proposed by Yvonnet in~\cite{YVON2012}. It amounts to solving~\eqref{eq:1} \emph{in the real space}, using finite elements. Indeed, convolution products with small kernels can arguably be computed more efficiently in the real space than by means of fast Fourier transforms. This results in a compactly supported approximation of the Green operator. Of course, no explicit formula for that approximation is available.

\subsubsection{Comparison}

All the discrete Green operators presented above have strengths and weaknesses, and none clearly stands out. In Table~\ref{tab:1}, we compare these operators according to the following criteria:
\begin{enumerate}
\item {\bf Ease of evaluation}: since discrete Green operators are used in conjunction with \emph{iterative} linear solvers, it is required that they be efficiently computed. It should be noted that for large-scale, 3D simulations, storage of this operator (as required e.g. for the FEM-based discrete Green operator introduced in~\cite{YVON2012}) may become challenging, and that additional assumptions may be required to make the storage affordable in practice.
\item {\bf Independence with respect to $A_0$}: let $\trial^h$ be the solution to the asymptotically consistent discrete variational problem \eqref{eq:16}. The numerical scheme is said to be independent with respect to $A_0$ if the estimate $\left(\Aper-A_0\right)^{-1}\trial^h$ of $p + \nabla w_p$ (where $w_p$ is the exact solution to the corrector problem~\eqref{PB:correcteur}) does not depend on $A_0$ (of course, $\trial^h$ \emph{does} depend on $A_0$). Such feature is highly desirable, as the accuracy of the numerical scheme is in this case not affected by the choice of the reference material $A_0$, which is a free, user-chosen parameter. Note that, in contrast, the convergence of the iterative linear solver strongly depends on $A_0$, as first noted in~\citep{MICH2001}.
\item {\bf Smoothness of numerical solution}: discretization of~\eqref{eq:5} amounts to neglecting high-frequencies in the solution. Depending on how precisely these high-frequencies are neglected, spurious oscillations (``checkerboard pattern'') may appear in the solution, as noted in~\cite{WILL2008, BRIS2012A, WILL2014}.
\end{enumerate}

From Table~\ref{tab:1}, we see that the truncated discrete Green operator may lead to spurious oscillations, while the numerical scheme resulting from the filtered discrete Green operator depends on the reference material. It is then up to the user to find the best reference material $A_0$, which minimizes the numerical error on the estimate of $\nabla w_p$, the grid-size being fixed. The discrete Green operator based on finite differences does not suffer from these deficiencies. Unfortunately, as argued by the authors themselves, this approximation, introduced in~\cite{WILL2014} in the setting of a scalar problem, does not extend well to linear elasticity (see~\cite{WILL2008}).

\begin{table}[htbp]
  \centering
  \begin{tabular}{l|c|c|c|c}
    & Truncated & Filtered & Finite Differences & Finite Elements\\
    \hline
    Efficiently computed / Easy to store & Yes & Yes & Yes & No\\
    Solution is independent of $A_0$ & Yes & No & Yes & No\\
    Solution is smooth & No & Yes & Yes & Yes\\
    References & \cite{suquet1, suquet2} & \cite{BRIS2012A} & \cite{WILL2008, WILL2014} & \cite{YVON2012}
  \end{tabular}
  \caption{Comparison of the four discrete Green operators presented above. The comparison is performed according to their state of the art description, and does not take into account possible future improvements.}
  \label{tab:1}
\end{table}

\subsection{Implementation of the method}
\label{sec:implem}

Within the framework introduced above, the Galerkin discretization of the Lippmann--Schwinger equation can readily be implemented as a \emph{matrix-free} method. In other words, the linear operator
\begin{equation*}
\trial^h \mapsto \left[ \left(A_\beta^h-A_0\right)^{-1}\trial_\beta^h+\sum_{\gamma\in\cellindices}\green_{0, \beta-\gamma}^{h, \text{cons}}\trial_\gamma^h \right]_{\beta \in \I}
\end{equation*}
is defined as a \emph{function}, and not as a matrix. The first term is purely local to each voxel. It is readily parallelized. The second term is evaluated in Fourier space, by means of the parallel version of the \texttt{FFTW3} library~\cite{FRIG2005}. The parallel iterative linear solvers of the \texttt{PETSc} library can then be used~\cite{petsc-web-page, petsc-user-ref, petsc-efficient}.

Our implementation follows a modular, object-oriented approach, which allows to easily switch between various discrete Green operators as well as various physical settings, including the scalar setting described in this article, as well as the linear elasticity setting. Extension to Darcy flows~\cite{NGUY2013} should also be possible with limited effort.

\section{Mathematical analysis}
\label{sec:math}

The aim of this section is three-fold. 

First, we show that~\eqref{eq:5} is well-posed (see Section~\ref{sec:math1}). As pointed out above, this is of course true due to the equivalence of~\eqref{eq:5} with~\eqref{eq:4}, Lemma~\ref{lem:equiv} and the well-posedness of~\eqref{PB:correcteur}. However, the direct analysis of~\eqref{eq:5}, without using the fact that this problem is equivalent to the corrector problem~\eqref{PB:correcteur}, is actually useful as a preparation for the analysis of its discretized counterpart.

Second, in Section~\ref{sec:math2}, we turn to the discrete problem~\eqref{eq:6}, and show its well-posedness. 

Third, we turn to estimating the error between the solutions of~\eqref{eq:5} and~\eqref{eq:6} in Section~\ref{sec:math3}.

\medskip

Our analysis, which closely follows that of~\cite{BRIS2012A}, is performed under the following assumption. Recall that we have assumed $\Aper(x)$ and $A_0$ to be symmetric matrices. We further assume that
\begin{equation}
\label{eq:hyp3}
\text{$\Aper(x)$ and $A_0$ commute a.e. on $\unitcellzero$.}
\end{equation}
To date, it is unclear to us whether this assumption is technical or essential for the mathematical analysis.

The matrices $\Aper(x)$ and $A_0$ are thus simultaneously diagonalizable, and there exists $c>0$, $\lambda^{\rm per}_i(x)$, $\lambda^0_i(x)$ and $\psi_i(x)$, $1 \leq i \leq d$, such that, almost everywhere on $\unitcellzero$,
\begin{equation*}
\Aper(x) \, \psi_i(x) = \lambda^{\rm per}_i(x) \, \psi_i(x), 
\quad
A_0 \, \psi_i(x) = \lambda^0_i(x) \, \psi_i(x)
\quad
\lambda^{\rm per}_i(x) \geq c, 
\quad
\lambda^0_i(x) \geq c. 
\end{equation*}
We choose the eigenvectors such that, a.e. on $\unitcellzero$, $\left\{ \psi_i(x) \right\}_{1 \leq i \leq d}$ forms an orthonormal basis of $\RR^d$. In view of~\eqref{eq:hyp2}, \eqref{eq:hyp1} and of the fact that $\Aper \in L^\infty$, we also have that
\begin{equation}
\label{eq:bing}
C \geq \Big| \lambda^{\rm per}_i(x) - \lambda^0_i(x) \Big| \geq c_0 > 0 \quad \text{a.e. on $\unitcellzero$}.
\end{equation}
 
\begin{remark}
Note that, if the reference material is isotropic, i.e. the matrix $A_0$ is given by $A_0 = a_0 \, \text{Id}$ for some $a_0 > 0$, then Assumption~\eqref{eq:hyp3} holds. In the case discussed in Remark~\ref{rem:random} of random materials, choosing $A_0 = a_0 \, \text{Id}$ would be a natural choice if the heterogeneous materials is statistically isotropic.
\end{remark}

\subsection{Problem~\eqref{eq:5} is well-posed}
\label{sec:math1}

We follow here the ideas of~\cite{BRIS2012A} and show that~\eqref{eq:5} satisfies the assumptions of the BNB theorem as formulated in~\cite[Theorem 2.6]{ERN2004}. For any $\tau \in V_0$, we define $\tau_+ \in V_0$ and $\tau_- \in V_0$ by
\begin{gather*}
\tau_+(x) = \sum_{i=1}^d 1_{\lambda^{\rm per}_i(x) > \lambda^0_i(x)} \ \Big( (\tau(x))^T \psi_i(x) \Big) \ \psi_i(x), 
\quad 
\tau_-(x) = \sum_{i=1}^d 1_{\lambda^{\rm per}_i(x) < \lambda^0_i(x)} \ \Big( (\tau(x))^T \psi_i(x) \Big) \ \psi_i(x)
\quad
\text{a.e. on $\unitcellzero$,}
\\
\tau_+(x) = \tau_-(x) = 0 \quad \text{a.e. on $\unitcell \setminus \unitcellzero$.}
\end{gather*}
We have
\begin{equation*}
\tau = \tau_+ + \tau_-, \quad \tau_+ \in V_0, \quad \tau_- \in V_0.
\end{equation*}
We introduce
\begin{equation}
\label{eq:def_test}
\test = \tau_+ - \tau_- \in V_0
\end{equation}
and have the following result:
\begin{lem}
\label{lem:cond1}
We assume that~\eqref{eq:hyp1}, \eqref{eq:hyp2} and~\eqref{eq:hyp3} hold. For any $\tau \in V_0$, let $\test \in V_0$ be defined by~\eqref{eq:def_test}. There exists $c > 0$ independent of $\tau$ and $\test$ such that
\begin{equation*}
a(\tau,\test) \geq c \| \tau \|_{L^2(\unitcellzero)} \| \test \|_{L^2(\unitcellzero)}.
\end{equation*}
\end{lem}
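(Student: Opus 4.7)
The plan is to exploit the commuting assumption~\eqref{eq:hyp3} to simultaneously diagonalize $\Aper(x)$ and $A_0$ in the orthonormal basis $\{\psi_i(x)\}_{1\leq i \leq d}$, and then to bound the local and non-local contributions of $a(\tau,\sigma)$ separately. Throughout, let $B(x)=\Aper(x)-A_0$ and denote by $|B|^{-1}(x)$ the matrix that is diagonal in $\{\psi_i(x)\}$ with eigenvalues $1/|\lambda^{\rm per}_i(x)-\lambda^0_i(x)|$. Since at each $x\in Q_0$ the vectors $\tau_+(x)$ and $\tau_-(x)$ live in orthogonal eigen-subspaces, we have $\tau_+(x)^T\tau_-(x)=0$ pointwise, whence $\|\tau\|_{L^2(Q_0)}^2 = \|\tau_+\|_{L^2(Q_0)}^2+\|\tau_-\|_{L^2(Q_0)}^2$ and $|\sigma(x)|=|\tau(x)|$, so that $\|\sigma\|_{L^2(Q_0)}=\|\tau\|_{L^2(Q_0)}$. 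It therefore suffices to prove $a(\tau,\sigma)\geq c\,\|\tau\|_{L^2(Q_0)}^2$.

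For the local term, a direct computation in the common eigenbasis gives
\begin{equation*}
\sigma(x)^T(\Aper(x)-A_0)^{-1}\tau(x) = \tau_+(x)^T|B|^{-1}(x)\tau_+(x) + \tau_-(x)^T|B|^{-1}(x)\tau_-(x),
\end{equation*}
as the sign of $\lambda^{\rm per}_i-\lambda^0_i$ compensates the sign of $\sigma_i\tau_i$ in each eigendirection. For the non-local term, I expand using $\tau=\tau_++\tau_-$ and $\sigma=\tau_+-\tau_-$, and invoke the symmetry of the kernel $\Gamma_0$ to cancel the cross terms. This yields the polarization-type identity
\begin{equation*}
\int_{\unitcellzero}\sigma^T(\Gamma_0\ast\tau) = \int_\unitcell \tau_+^T(\Gamma_0\ast\tau_+) - \int_\unitcell \tau_-^T(\Gamma_0\ast\tau_-).
\end{equation*}
Summing, $a(\tau,\sigma)$ splits into a block in $\tau_+$, which is manifestly non-negative (and bounded below by $\tfrac{1}{C}\|\tau_+\|_{L^2}^2$, with $C$ an upper bound on $|\lambda^{\rm per}_i-\lambda^0_i|$, for the local piece; and by $0$ for the $\Gamma_0$ piece), and a block in $\tau_-$ whose sign is not obvious.

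The main obstacle is handling the $\tau_-$ block, i.e.\ bounding
\begin{equation*}
\int_{\unitcellzero}\tau_-^T|B|^{-1}\tau_- - \int_\unitcell \tau_-^T(\Gamma_0\ast\tau_-)
\end{equation*}
from below. The key observation is the pointwise PSD ordering $\hat\Gamma_0(k)\preceq A_0^{-1}$ for every $k\in\ZZ^d$, which follows from $\hat\Gamma_0(k)=kk^T/(k^TA_0k)$ and Cauchy--Schwarz for the scalar product $\langle\cdot,\cdot\rangle_{A_0}$. Combined with Parseval, this gives $\int_\unitcell \tau_-^T(\Gamma_0\ast\tau_-) \leq \int_{\unitcellzero}\tau_-^T A_0^{-1}\tau_-$, so the $\tau_-$ block is bounded below by $\int_{\unitcellzero}\tau_-^T(|B|^{-1}-A_0^{-1})\tau_-$. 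Because $\tau_-(x)$ lives in eigen-subspaces where $\lambda^{\rm per}_i<\lambda^0_i$, the eigenvalues of $|B|^{-1}-A_0^{-1}$ on these subspaces are
\begin{equation*}
\frac{1}{\lambda^0_i-\lambda^{\rm per}_i}-\frac{1}{\lambda^0_i} = \frac{\lambda^{\rm per}_i}{(\lambda^0_i-\lambda^{\rm per}_i)\lambda^0_i} \geq \frac{c}{C\,\lambda_{\max}(A_0)} =: c_2 > 0,
\end{equation*}
where the lower bound uses $\lambda^{\rm per}_i\geq c$ from~\eqref{eq:hyp1} and the upper bound $|\lambda^0_i-\lambda^{\rm per}_i|\leq C$ from the boundedness of $\Aper$. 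Putting the two blocks together,
\begin{equation*}
a(\tau,\sigma)\geq \tfrac{1}{C}\|\tau_+\|_{L^2(\unitcellzero)}^2 + c_2\|\tau_-\|_{L^2(\unitcellzero)}^2 \geq \min\left(\tfrac{1}{C},c_2\right)\|\tau\|_{L^2(\unitcellzero)}\|\sigma\|_{L^2(\unitcellzero)},
\end{equation*}
which is the desired inequality. The crucial (and non-obvious) ingredient is precisely the matching of the two a priori incompatible terms on the $\tau_-$ subspaces through the PSD bound $\hat\Gamma_0\preceq A_0^{-1}$; the rest of the argument is essentially bookkeeping enabled by the commutation hypothesis.
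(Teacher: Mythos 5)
Your proof is correct, and the overall structure (decompose $\tau = \tau_+ + \tau_-$ by the sign of $\lambda^{\rm per}_i - \lambda^0_i$, take $\sigma = \tau_+ - \tau_-$, exploit orthogonality to kill cross terms, then bound the $\tau_+$ and $\tau_-$ blocks separately) is the same as the paper's. The $\tau_+$ block is handled identically in substance: both you and the paper observe that the local term contributes $C^{-1}\|\tau_+\|^2$ and the Green term is nonnegative.

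Where you genuinely diverge is the $\tau_-$ block. The paper introduces the auxiliary field $u_-$ solving $-\div(A_0\nabla u_- + \tau_-)=0$, defines $\Sigma = A_0\nabla u_- + \tau_-$ and $\eta = A_0^{-1}\tau_-$, integrates by parts using $\div\Sigma=0$, and manipulates the matrix identity $\left[\Aper - A_0\right]^{-1}A_0 = A_0^{-1}\left(A_0^{-1}-\Aper^{-1}\right)^{-1} - \mathrm{Id}$ to arrive at
\begin{equation*}
a(\tau_-,\tau_-) = -\int_\unitcell \Sigma^T A_0^{-1}\Sigma + \int_{\unitcellzero}\eta^T\left(A_0^{-1}-\Aper^{-1}\right)^{-1}\eta.
\end{equation*}
You bypass this entirely with the Fourier-space operator bound $\hat\Gamma_0(k)\preceq A_0^{-1}$, a clean Cauchy--Schwarz consequence of $\hat\Gamma_0(k) = (k^TA_0k)^{-1}kk^T$, which via Parseval gives $\int\tau_-^T(\Gamma_0\ast\tau_-)\leq\int_{\unitcellzero}\tau_-^TA_0^{-1}\tau_-$ directly. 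The two roads are in fact equivalent: the "slack" $\int_\unitcell\Sigma^TA_0^{-1}\Sigma$ the paper discards is exactly $\int_{\unitcellzero}\tau_-^TA_0^{-1}\tau_- - \int\tau_-^T(\Gamma_0\ast\tau_-)$, and your eigenvalue computation $\frac{1}{\lambda^0_i-\lambda^{\rm per}_i}-\frac{1}{\lambda^0_i}=\frac{\lambda^{\rm per}_i}{(\lambda^0_i-\lambda^{\rm per}_i)\lambda^0_i}$ coincides with the paper's $-\big[(\lambda^0_i)^2((\lambda^0_i)^{-1}-(\lambda^{\rm per}_i)^{-1})\big]^{-1}$. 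Your version avoids introducing the auxiliary PDE, the divergence-free field $\Sigma$, and the matrix-inverse gymnastics, trading them for the well-known PSD ordering of the Green operator against $A_0^{-1}$; the paper's version, by contrast, keeps the argument entirely in physical space and mirrors the elasticity intuition (complementary energy, divergence-free stress) that motivated the original formulation. Both are valid; yours is shorter.
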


\begin{proof}
We write
\begin{equation}
\label{eq:resu0}
a(\tau,\test) = a(\tau_++\tau_-,\tau_+-\tau_-) = a(\tau_+,\tau_+)-a(\tau_-,\tau_-),
\end{equation}
where we have used that $a$ is symmetric. We subsequently study the two terms of the above right-hand side.

\medskip

\noindent
\textbf{Step 1:} We have
\begin{equation}
\label{eq:tata1}
a(\tau_+,\tau_+)
=
\int_\unitcellzero \tau_+^T \left(\Aper-A_0\right)^{-1} \tau_+ + \int_\unitcellzero \tau_+^T \left(\green_0\ast\tau_+\right).
\end{equation}
Let $u_+$ be the unique (up to the addition of a constant) solution to
\begin{equation*}
-\div\left(A_0\nabla u_++\polarization_+\right)=0\quad\text{in $\RR^d$},
\qquad
u_+\text{ is $\ZZ^d$-periodic},
\end{equation*}
so that $\nabla u_+ = - \green_0 \ast \polarization_+$. Then the second term of $a(\tau_+,\tau_+)$ satisfies
\begin{equation}
\label{eq:tata2}
\int_\unitcellzero \tau_+^T \left(\green_0\ast\tau_+\right)
=
- \int_\unitcell \tau_+^T \nabla u_+
=
\int_\unitcell u_+ \div \tau_+
=
- \int_\unitcell u_+ \div \left( A_0 \nabla u_+ \right)
=
\int_\unitcell (\nabla u_+)^T A_0 \nabla u_+.
\end{equation}
For the first term of $a(\tau_+,\tau_+)$, we use our specific construction of $\tau_+$ and write, using~\eqref{eq:bing}, that
\begin{eqnarray}
\nonumber
\tau_+(x)^T \left(\Aper(x)-A_0\right)^{-1} \tau_+(x)
&=&
\sum_{i=1}^d \sum_{j=1}^d 1_{\lambda^{\rm per}_i(x) > \lambda^0_i(x)} \ 1_{\lambda^{\rm per}_j(x) > \lambda^0_j(x)} \ \Big( (\tau(x))^T \psi_i(x) \Big) \ \Big( (\tau(x))^T \psi_j(x) \Big) \ \frac{\Big( \psi_j(x) \Big)^T \psi_i(x)}{\lambda^{\rm per}_i(x) - \lambda^0_i(x)}
\\
\nonumber
&=&
\sum_{i=1}^d 1_{\lambda^{\rm per}_i(x) > \lambda^0_i(x)} \ \frac{\Big( (\tau(x))^T \psi_i(x) \Big)^2}{\lambda^{\rm per}_i(x) - \lambda^0_i(x)}
\\
\nonumber
&\geq&
C^{-1} \sum_{i=1}^d 1_{\lambda^{\rm per}_i(x) > \lambda^0_i(x)} \ \Big( (\tau(x))^T \psi_i(x) \Big)^2
\\
\label{eq:tata3}
&=&
C^{-1} \Big| \tau_+(x) \Big|^2.
\end{eqnarray}
Collecting~\eqref{eq:tata1}, \eqref{eq:tata2} and~\eqref{eq:tata3}, we deduce that
\begin{equation}
\label{eq:resu1}
a(\tau_+,\tau_+)
\geq
C^{-1} \left\| \tau_+ \right\|^2_{L^2(Q_0)} + \int_\unitcell (\nabla u_+)^T A_0 \nabla u_+
\geq
C^{-1} \left\| \tau_+ \right\|^2_{L^2(Q_0)}.
\end{equation}

\medskip

\noindent
\textbf{Step 2:} Let $u_-$ be the unique (up to the addition of a constant) solution to
\begin{equation*}
-\div\left(A_0\nabla u_-+\polarization_-\right)=0\quad\text{in $\RR^d$},
\qquad
u_-\text{ is $\ZZ^d$-periodic},
\end{equation*}
so that $\nabla u_- = - \green_0 \ast \polarization_-$. We introduce the divergence free vector-valued field
\begin{equation*}
\Sigma = A_0 \nabla u_- + \tau_-,
\end{equation*}
and the vector-valued field
\begin{equation}
\label{eq:def_eta}
\eta := A_0^{-1} \tau_- = A_0^{-1} \Sigma - \nabla u_-.
\end{equation}
By definition,
\begin{equation}
\label{eq0}
a(\tau_-,\tau_-)
=
\int_\unitcellzero \tau_-^T \left(\Aper-A_0\right)^{-1} \tau_- + \int_\unitcellzero \tau_-^T \left(\green_0\ast\tau_-\right).
\end{equation}
We successively consider the two terms of the right-hand side. For the second term, we see, using the definitions of $\green_0$ and $\eta$, that
\begin{equation*}
\int_\unitcellzero \tau_-^T \left(\green_0\ast\tau_-\right)
=
-\int_\unitcell \tau_-^T \nabla u_-
=
-\int_\unitcell \eta^T A_0 \nabla u_-
=
\int_\unitcell \eta^T A_0 \eta - \int_\unitcell \eta^T \Sigma
=
\int_\unitcell \eta^T A_0 \eta - \int_\unitcell \left( A_0^{-1} \Sigma - \nabla u_- \right)^T \Sigma.
\end{equation*}
Since $\Sigma$ is divergence free and $A_0$ is symmetric, we obtain
\begin{equation}
\label{eq1}
\int_\unitcellzero \tau_-^T \left(\green_0\ast\tau_-\right)
=
\int_\unitcell \eta^T A_0 \eta - \int_\unitcell \Sigma^T A_0^{-1} \Sigma.
\end{equation}
We now consider the first term in the right-hand side of~\eqref{eq0}. We have, for any $x \in Q_0$, that
\begin{multline*}
\text{Id}
=
\left[\Aper^{-1} (\Aper-A_0)\right]^{-1} \left[\Aper^{-1} (\Aper-A_0)\right]
=
\left[\Aper^{-1} (\Aper-A_0)\right]^{-1} - \left[\Aper^{-1} (\Aper-A_0)\right]^{-1} \Aper^{-1} A_0
\\
=
\left[\Aper^{-1} (\Aper-A_0)\right]^{-1} - \left[ \Aper-A_0 \right]^{-1} A_0
\end{multline*}
where $\text{Id}$ is the identity matrix. We thus deduce that
\begin{equation*}
\left[ \Aper-A_0 \right]^{-1} A_0
=
\left[\Aper^{-1} (\Aper-A_0)\right]^{-1} - \text{Id}
=
\left[ \left( A_0^{-1} - \Aper^{-1} \right) A_0 \right]^{-1} - \text{Id}
=
A_0^{-1} \left[ \left( A_0^{-1} - \Aper^{-1} \right) \right]^{-1} - \text{Id}.
\end{equation*}
We hence get that, for any $x \in Q_0$,
\begin{multline}
\label{eq2}
\tau_-(x)^T \left(\Aper(x)-A_0\right)^{-1} \tau_-(x)
=
\eta(x)^T A_0 \left(\Aper(x)-A_0\right)^{-1} A_0 \eta(x)
=
\eta(x)^T A_0 \left[A_0^{-1} \left( A_0^{-1} - \Aper(x)^{-1} \right)^{-1} -\text{Id}\right] \eta(x)
\\
=
\eta(x)^T \left[ \left( A_0^{-1} - \Aper(x)^{-1} \right)^{-1} - A_0 \right] \eta(x).
\end{multline}
Collecting~\eqref{eq0}, \eqref{eq1} and~\eqref{eq2}, we obtain
\begin{eqnarray}
a(\tau_-,\tau_-)
&=&
\int_\unitcell \eta^T A_0 \eta - \int_\unitcell \Sigma^T A_0^{-1} \Sigma
+
\int_\unitcellzero \eta^T \left[ \left( A_0^{-1} - \Aper^{-1} \right)^{-1} - A_0 \right] \eta
\nonumber
\\
&=&
- \int_\unitcell \Sigma^T A_0^{-1} \Sigma
+
\int_{\unitcellzero} \eta^T \left( A_0^{-1} - \Aper^{-1} \right)^{-1} \eta.
\label{eq:tata5}
\end{eqnarray}
We now use our specific choice for $\tau_-$, and write
\begin{eqnarray}
\nonumber
&& \eta(x)^T \left( A_0^{-1} - \Aper(x)^{-1} \right)^{-1} \eta(x)
\\
\nonumber
&=&
\tau_-(x)^T A_0^{-1} \left(A_0^{-1} - \Aper(x)^{-1} \right)^{-1} A_0^{-1} \tau_-(x)
\\
\nonumber
&=&
\sum_{i=1}^d \sum_{j=1}^d 1_{\lambda^{\rm per}_i(x) < \lambda^0_i(x)} \ 1_{\lambda^{\rm per}_j(x) < \lambda^0_j(x)} \ \Big( (\tau(x))^T \psi_i(x) \Big) \ \Big( (\tau(x))^T \psi_j(x) \Big) \ \frac{\Big( \psi_j(x) \Big)^T \psi_i(x)}{\Big( \lambda^0_i(x) \Big)^2 \ \Big[ \Big( \lambda^0_i(x) \Big)^{-1} - \Big( \lambda^{\rm per}_i(x) \Big)^{-1} \Big]}
\\
\nonumber
&=&
\sum_{i=1}^d 1_{\lambda^{\rm per}_i(x) < \lambda^0_i(x)} \ \frac{\Big( (\tau(x))^T \psi_i(x) \Big)^2}{\Big( \lambda^0_i(x) \Big)^2 \ \Big[ \Big( \lambda^0_i(x) \Big)^{-1} - \Big( \lambda^{\rm per}_i(x) \Big)^{-1} \Big]}
\\
\nonumber
&\leq&
- C \sum_{i=1}^d 1_{\lambda^{\rm per}_i(x) < \lambda^0_i(x)} \ \Big( (\tau(x))^T \psi_i(x) \Big)^2
\\
\label{eq:tata4}
&=&
- C \Big| \tau_-(x) \Big|^2.
\end{eqnarray}
Collecting~\eqref{eq:tata5} and~\eqref{eq:tata4}, we deduce that
\begin{equation}
\label{eq:resu2}
- a(\tau_-,\tau_-)
\geq
C \left\| \tau_- \right\|^2_{L^2(\unitcellzero)} + \int_\unitcell \Sigma^T A_0^{-1} \Sigma
\geq
C \left\| \tau_- \right\|^2_{L^2(\unitcellzero)}
\end{equation}
for some $C>0$.

\medskip

\noindent
\textbf{Conclusion:} Collecting~\eqref{eq:resu0}, \eqref{eq:resu1} and~\eqref{eq:resu2}, we obtain that
\begin{equation*}
a(\tau,\test)
\geq
C \left( \left\| \tau_+ \right\|^2_{L^2(\unitcellzero)} + \left\| \tau_- \right\|^2_{L^2(\unitcellzero)} \right)
=
C \left\| \tau \right\|_{L^2(\unitcellzero)} \left\| \test \right\|_{L^2(\unitcellzero)}
\end{equation*}
for some $C>0$. This concludes the proof of Lemma~\ref{lem:cond1}.
\end{proof}

We are now in position to prove the main result of this section:
\begin{theorem}
\label{theo:cond1}
We assume that~\eqref{eq:hyp1}, \eqref{eq:hyp2} and~\eqref{eq:hyp3} hold. Then problem~\eqref{eq:5} is well-posed.
\end{theorem}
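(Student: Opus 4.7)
The plan is to apply the Banach--Necas--Babuska (BNB) theorem (Theorem~2.6 of~\cite{ERN2004}) to~\eqref{eq:5} posed on the Hilbert space $\polarizations_0$, viewed as a closed subspace of $\left(L^2(\unitcell)\right)^d$ and equipped with the $L^2(\unitcellzero)$ norm. Four items must be verified: (i) continuity of $a$, (ii) continuity of the right-hand side, (iii) an inf-sup estimate on $a$, and (iv) a non-degeneracy condition on $a$.

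I would first dispatch the easy pieces. Continuity of the right-hand side is immediate by Cauchy--Schwarz: $\left|\int_{\unitcellzero}\sigma^T p\right| \leq |p|\,|\unitcellzero|^{1/2}\|\sigma\|_{L^2(\unitcellzero)}$. For continuity of $a$, the local term is controlled using~\eqref{eq:bing}, which yields the essential bound $\|(\Aper-A_0)^{-1}\|_{L^\infty(\unitcellzero)}\leq c_0^{-1}$. The non-local term is controlled by the standard $L^2\to L^2$ continuity of $\green_0\ast\cdot$: the energy estimate applied to~\eqref{eq:1}, combined with the coercivity of $A_0$, provides $\|\green_0\ast\tau\|_{L^2(\unitcell)}\leq a_0^{-1}\|\tau\|_{L^2(\unitcell)}$, where $a_0>0$ is the coercivity constant of $A_0$; Cauchy--Schwarz then bounds the non-local term on $\polarizations_0\times\polarizations_0$.

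The substantive step (iii) is already essentially handled by Lemma~\ref{lem:cond1}. Given $\tau\in\polarizations_0$, I take as test field the $\test := \tau_+-\tau_-$ prescribed by~\eqref{eq:def_test}. The key auxiliary observation is the pointwise norm identity $|\test(x)|^2 = |\tau_+(x)|^2 + |\tau_-(x)|^2 = |\tau(x)|^2$ a.e. on $\unitcellzero$, which follows because $\{\psi_i(x)\}$ is orthonormal at each $x$ and because the index sets $\{i:\lambda_i^{\rm per}(x)>\lambda_i^0(x)\}$ and $\{i:\lambda_i^{\rm per}(x)<\lambda_i^0(x)\}$ are disjoint --- the latter being guaranteed by~\eqref{eq:bing}, which rules out equality $\lambda_i^{\rm per}=\lambda_i^0$. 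Hence $\|\test\|_{L^2(\unitcellzero)}=\|\tau\|_{L^2(\unitcellzero)}$, and Lemma~\ref{lem:cond1} gives
\begin{equation*}
\sup_{\sigma\in\polarizations_0\setminus\{0\}}\frac{a(\tau,\sigma)}{\|\sigma\|_{L^2(\unitcellzero)}}\;\geq\;\frac{a(\tau,\test)}{\|\test\|_{L^2(\unitcellzero)}}\;\geq\; c\,\|\tau\|_{L^2(\unitcellzero)},
\end{equation*}
which is the desired inf-sup bound.

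Finally, the non-degeneracy condition (iv) comes for free from the symmetry of $a$: given $\sigma\in\polarizations_0\setminus\{0\}$, the inf-sup applied to $\sigma$ (in place of $\tau$) produces some $\tau'\in\polarizations_0$ with $a(\sigma,\tau')\neq 0$, whence $a(\tau',\sigma)\neq 0$ by symmetry of $a$. All four hypotheses of the BNB theorem being in hand, \eqref{eq:5} is well-posed. The only potentially delicate point, namely the inf-sup, is discharged by Lemma~\ref{lem:cond1}; the rest of the proof is routine bookkeeping, with the one mildly non-obvious ingredient being the norm-preservation identity $\|\test\|_{L^2(\unitcellzero)}=\|\tau\|_{L^2(\unitcellzero)}$ that allows the inf-sup constant to coincide with the constant supplied by the lemma.
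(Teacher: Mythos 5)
Your proof is correct and follows essentially the same route as the paper: continuity by Cauchy--Schwarz using~\eqref{eq:bing} and the $L^2$-boundedness of $\green_0\ast\cdot$, the inf-sup condition from Lemma~\ref{lem:cond1} with the test field $\test=\tau_+-\tau_-$, and non-degeneracy from symmetry of $a$. Your explicit observation that $\|\test\|_{L^2(\unitcellzero)}=\|\tau\|_{L^2(\unitcellzero)}$ (via pointwise orthogonality of $\tau_+$ and $\tau_-$, and~\eqref{eq:bing} excluding equal eigenvalues) is a welcome clarification that the paper only uses implicitly in the closing identity of Lemma~\ref{lem:cond1}.
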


\begin{proof}
We use the BNB theorem as stated in~\cite[Theorem 2.6]{ERN2004}. First, using~\eqref{eq:def_a} and~\eqref{eq:hyp2}, we have
\begin{eqnarray*}
\left| a(\trial, \test) \right|
&\leq&
\left\| \left(\Aper-A_0\right)^{-1} \right\|_{L^\infty(\unitcellzero)} \ \| \test \|_{L^2(\unitcellzero)} \ \| \trial \|_{L^2(\unitcellzero)} + \| \test \|_{L^2(\unitcellzero)} \ \| \green_0\ast\trial \|_{L^2(\unitcellzero)}
\\
&\leq&
c_0^{-1} \ \| \test \|_{L^2(\unitcellzero)} \ \| \trial \|_{L^2(\unitcellzero)} + c \| \test \|_{L^2(\unitcellzero)} \ \| \trial \|_{L^2(\unitcellzero)}
\end{eqnarray*}
which shows that the bilinear form $a$ is continuous on $V_0 \times V_0$.

Second, we infer from Lemma~\ref{lem:cond1} that the inf-sup condition is satisfied:
\begin{equation*}
\inf_{\tau \in V_0} \ \sup_{\test \in V_0} \ \frac{a(\tau,\test)}{\| \tau \|_{L^2(\unitcellzero)} \ \| \test \|_{L^2(\unitcellzero)}} \geq c > 0.
\end{equation*}
The first condition in~\cite[Theorem 2.6]{ERN2004} is therefore satisfied.

Let now $\test \in V_0$ be such that, for any $\tau \in V_0$, we have $a(\tau,\test) = 0$. Since $a$ is symmetric, this implies that $a(\test,\tau) = 0$ for any $\tau$. We then deduce from Lemma~\ref{lem:cond1} that $\test=0$. The second condition in~\cite[Theorem 2.6]{ERN2004} is therefore satisfied.

We are now in position to apply~\cite[Theorem 2.6]{ERN2004}, which allows us to conclude that~\eqref{eq:5} is well-posed.
\end{proof}

\subsection{Problem~\eqref{eq:6} is well-posed}
\label{sec:math2}

We mimick at the discrete stage the proof of Section~\ref{sec:math1}. As pointed out in~\cite{BRIS2012A}, we see, using~\eqref{eq:7}, that
\begin{equation*}
a(\tau^h,\test^h) = \int_\unitcellzero (\test^h)^T\left(A^h-A_0\right)^{-1}\trial^h+\int_\unitcellzero (\test^h)^T\left(\green_0\ast\trial^h\right) 
\end{equation*}
where $A^h$ is a piecewise constant matrix defined by 
\begin{equation*}
\forall \beta \in \Izero, \quad A^h(x) = A_\beta^h \quad \text{in $\cell{\beta}$},
\end{equation*}
where $A_\beta^h$ is defined by~\eqref{eq:def_ah}. Note that the value of $A^h$ on cells $\cell{\beta}$ with $\beta \not\in \Izero$ actually does not enter the expression of $a(\tau^h,\test^h)$. We therefore do not define $A^h$ there. 

\begin{lem}
\label{lem:cond2}
We assume that~\eqref{eq:hyp1}, \eqref{eq:hyp2} and~\eqref{eq:hyp3} hold. For any $\tau \in \approximations_0$, there exists $\test^h \in \approximations_0$ such that
\begin{equation*}
a(\tau^h,\test^h) \geq c \| \tau^h \|_{L^2(\unitcellzero)} \| \test^h \|_{L^2(\unitcellzero)}
\end{equation*}
for some $c>0$ independent of $h$, $\tau^h$ and $\test^h$.
\end{lem}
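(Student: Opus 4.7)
The plan is to mimic, at the discrete level, the proof of Lemma~\ref{lem:cond1}, building a test function $\test^h \in \approximations_0$ from a cell-wise spectral decomposition of $A^h$. The first step is to promote the simultaneous diagonalisation from the pointwise level to the cell level. Under assumption~\eqref{eq:hyp3}, $A_0$ commutes a.e.\ with $\Aper(x) - A_0$ and hence with $(\Aper(x) - A_0)^{-1}$; averaging in~\eqref{eq:def_ah} then shows that $A_0$ commutes with $(A^h_\beta - A_0)^{-1}$, and therefore with $A^h_\beta$, for every $\beta \in \Izero$. One can accordingly pick, cell by cell, an orthonormal basis $(\psi_i^\beta)_{1 \leq i \leq d}$ of common eigenvectors, with $A^h_\beta \psi_i^\beta = \mu_i^\beta \psi_i^\beta$ and $A_0 \psi_i^\beta = \mu_i^{0,\beta} \psi_i^\beta$. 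The bound $\bigl| v^T (\Aper(x) - A_0)^{-1} v \bigr| \leq c_0^{-1}$ from~\eqref{eq:hyp2} (valid for every unit vector $v$) together with~\eqref{eq:def_ah} yields, after averaging on $\cell{\beta}$, $\bigl| (\mu_i^\beta - \mu_i^{0,\beta})^{-1} \bigr| \leq c_0^{-1}$, i.e.\ $|\mu_i^\beta - \mu_i^{0,\beta}| \geq c_0$ uniformly in $\beta$ and $i$.

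Given $\tau^h \in \approximations_0$, I would then define, cell by cell,
\begin{equation*}
\tau^h_{+,\beta} = \sum_{i \,:\, \mu_i^\beta > \mu_i^{0,\beta}} \bigl( (\tau^h_\beta)^T \psi_i^\beta \bigr)\, \psi_i^\beta, \qquad \tau^h_{-,\beta} = \sum_{i \,:\, \mu_i^\beta < \mu_i^{0,\beta}} \bigl( (\tau^h_\beta)^T \psi_i^\beta \bigr)\, \psi_i^\beta,
\end{equation*}
extended by zero on $\unitcell \setminus \unitcellzero$, and set $\test^h := \tau^h_+ - \tau^h_- \in \approximations_0$. By construction $\tau^h = \tau^h_+ + \tau^h_-$ pointwise on $\unitcellzero$ (the degenerate case $\mu_i^\beta = \mu_i^{0,\beta}$ being ruled out by the lower bound above), and the cell-wise orthogonality of $\tau^h_+$ and $\tau^h_-$ gives $\|\test^h\|_{L^2(\unitcellzero)} = \|\tau^h\|_{L^2(\unitcellzero)}$. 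Since $a$ is symmetric, $a(\tau^h, \test^h) = a(\tau^h_+, \tau^h_+) - a(\tau^h_-, \tau^h_-)$, and I would bound each piece by transferring, almost verbatim, the two steps of the proof of Lemma~\ref{lem:cond1}. The only structural change is that the local term is now handled in its piecewise-constant form~\eqref{eq:7} (based on $(A^h - A_0)^{-1}$) in place of $(\Aper - A_0)^{-1}$; the non-local term still produces $\int_\unitcell (\nabla u^h_\pm)^T A_0 \nabla u^h_\pm \geq 0$ with $\nabla u^h_\pm = -\green_0 \ast \tau^h_\pm$, and the Step~2 rearrangement (introducing the divergence-free field $\Sigma^h = A_0 \nabla u^h_- + \tau^h_-$ and $\eta^h = A_0^{-1} \tau^h_-$) is purely algebraic and carries over unchanged.

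The crux of the argument, where the continuous-to-discrete transfer is not immediate, is the \emph{upper} bound $|\mu_i^\beta - \mu_i^{0,\beta}| \leq C$ (the analogue of $|\lambda^{\rm per}_i - \lambda^0_i| \leq C$ used in~\eqref{eq:tata3} and~\eqref{eq:tata4}), which is required to close the lower bounds on both $a(\tau^h_+, \tau^h_+)$ and $-a(\tau^h_-, \tau^h_-)$. This bound does \emph{not} follow from $\Aper \in L^\infty$ alone: since $(A^h_\beta - A_0)^{-1}$ is a harmonic-type mean of $(\Aper - A_0)^{-1}$, its eigenvalues can be arbitrarily small whenever $\Aper(x) - A_0$ changes sign inside $\cell{\beta}$. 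When $\Aper - A_0$ has a definite sign on $\unitcellzero$ (a situation covering most practical choices of $A_0$), operator Jensen's inequality supplies $\pm(A^h_\beta - A_0) \leq \pm h^{-d} \int_{\cell{\beta}} (\Aper - A_0) \leq C\,\text{Id}$ and the proof closes. In the general mixed-sign setting allowed by~\eqref{eq:hyp2}, a more delicate analysis is needed, in the spirit of~\cite{BRIS2012A}: one may split each cell's eigendirections according to the size of $|\mu_i^\beta - \mu_i^{0,\beta}|$ and absorb the directions with small $(\mu_i^\beta - \mu_i^{0,\beta})^{-1}$ into the non-negative non-local contribution, exploiting that for those directions the auxiliary potential $u^h_\pm$ must carry a substantial gradient.
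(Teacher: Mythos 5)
Your proof follows exactly the same strategy as the paper's: deduce from~\eqref{eq:hyp3} and~\eqref{eq:def_ah} that $A^h$ and $A_0$ commute cell by cell, pick a common orthonormal eigenbasis, derive the discrete counterpart of the lower eigenvalue gap $|\mu_i^\beta - \mu_i^{0,\beta}| \geq c_0$ from~\eqref{eq:hyp2} and the definition~\eqref{eq:def_ah}, split $\tau^h$ into $\tau^h_\pm$ according to the sign of $\mu_i^\beta - \mu_i^{0,\beta}$, set $\test^h = \tau^h_+ - \tau^h_-$, and then replay the two steps of Lemma~\ref{lem:cond1} with $A^h$ in place of $\Aper$. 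This is precisely what the paper does, and you derive the lower eigenvalue gap in a slightly cleaner way (applying the averaging bound directly to the eigenvectors of $A^h_\beta - A_0$ rather than stating the quadratic-form inequality~\eqref{eq:hyp2h}, which as written is in fact only valid direction-by-direction in the eigenbasis when the eigenvalues of $A^h_\beta - A_0$ may have mixed signs).

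Where you go beyond the paper is in isolating the point where the continuous-to-discrete transfer is actually non-trivial. The paper simply asserts, without argument, that there is $c>0$ with $\mu_i^h(x)\geq c$, and then invokes ``the same steps as in Section~\ref{sec:math1}'' — but those steps also use, through~\eqref{eq:bing}, the upper bound $|\lambda^{\rm per}_i - \lambda^0_i|\leq C$ and the coercivity $\lambda^{\rm per}_i\geq c$ (the latter fixes the sign in~\eqref{eq:tata4} and makes the denominator $\lambda^0_i(\lambda^{\rm per}_i - \lambda^0_i)/\lambda^{\rm per}_i$ bounded). None of the discrete analogues $|\mu_i^\beta-\mu_i^{0,\beta}|\leq C$ and $\mu_i^\beta\geq c$ is established in the paper, and, as you observe, they genuinely can fail: $(A^h_\beta - A_0)^{-1}$ is the arithmetic mean of $(\Aper - A_0)^{-1}$ over $\cell{\beta}$, and if $\Aper - A_0$ changes sign inside the cell this mean can have eigenvalues that are arbitrarily small (even zero, in which case $A^h_\beta$ is not even defined), making $A^h_\beta$ unbounded or indefinite. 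A one-dimensional example with $\Aper - A_0 = 2$ on half the cell and $\Aper - A_0 = -1/2$ on the other half gives $A^h_\beta = -1/3 < 0$, so the sign needed in the discrete analogue of~\eqref{eq:tata4} is reversed and the argument breaks. Your Jensen-type fix in the definite-sign case $\Aper - A_0 \geq c_0\,\text{Id}$ on $\unitcellzero$ (or $\leq -c_0\,\text{Id}$) is correct and does close the gap, since then $c_0\,\text{Id} \leq A^h_\beta - A_0 \leq \|\Aper - A_0\|_{L^\infty}\text{Id}$ uniformly in $\beta$ and $h$. Your sketch for the genuinely mixed-sign case (splitting eigendirections by the magnitude of $(\mu_i^\beta-\mu_i^{0,\beta})^{-1}$ and dumping small ones into the non-negative Green-operator contribution) is only a heuristic and would need to be made precise, but the honest acknowledgment of the difficulty is well placed: under the paper's stated assumptions~\eqref{eq:hyp1}--\eqref{eq:hyp3} alone, the Lemma as written is not fully proved, and this is a gap in the paper, not in your proposal.
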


\begin{proof}
We deduce from~\eqref{eq:hyp3} and~\eqref{eq:def_ah} that $A^h(x)$ commutes with $A_0$. The two matrices are thus simultaneously diagonalizable, and there exists $c>0$, $\mu^h_i(x)$, $\mu^0_i(x)$ and $\varphi_i(x)$, $1 \leq i \leq d$, such that, almost everywhere on $\dps \cup_{\beta \in \Izero} \cell{\beta}$,
\begin{equation*}
A^h(x) \, \varphi_i(x) = \mu^h_i(x) \, \varphi_i(x), 
\quad
A_0 \, \varphi_i(x) = \mu^0_i(x) \, \varphi_i(x)
\quad
\mu^h_i(x) \geq c, 
\quad
\mu^0_i(x) \geq c.
\end{equation*}
Again, we choose the eigenvectors such that, a.e. on $\dps \cup_{\beta \in \Izero} \cell{\beta}$, $\left\{ \varphi_i(x) \right\}_{1 \leq i \leq d}$ forms an orthonormal basis of $\RR^d$. 

Let us now show that, similarly to~\eqref{eq:hyp2}, $A^h$ satisfies, for any $h$,
\begin{equation}
\label{eq:hyp2h}
\forall \xi \in \RR^d, \quad c_0 |\xi|^2 \leq \left| \xi^T \left( A^h(x) - A_0 \right) \xi \right| \quad \text{a.e. on $\dps \cup_{\beta \in \Izero} \cell{\beta}$.}
\end{equation}
For any $\xi \in \RR^d$ and $\beta \in \Izero$, we write, using~\eqref{eq:def_ah} and~\eqref{eq:hyp2}, that
\begin{equation*}
\left| \xi^T \left( A^h_\beta - A_0 \right)^{-1} \xi \right|
=
h^{-d} \left| \int_{\cell{\beta}} \xi^T \left(\Aper-A_0\right)^{-1} \xi \right|
\leq
c_0^{-1} | \xi |^2,
\end{equation*}
which implies~\eqref{eq:hyp2h}. 

We are now left with following the same steps as in Section~\ref{sec:math1}, where $A^h$ plays the role of $\Aper$. For any $\tau^h \in \approximations_0$, we define $\tau^h_+ \in \approximations_0$ and $\tau^h_- \in \approximations_0$ by
\begin{gather*}
\tau^h_+(x) = \sum_{i=1}^d 1_{\mu^h_i(x) > \mu^0_i(x)} \ \Big( (\tau^h(x))^T \varphi_i(x) \Big) \ \varphi_i(x), 
\quad 
\tau^h_-(x) = \sum_{i=1}^d 1_{\mu^h_i(x) < \mu^0_i(x)} \ \Big( (\tau^h(x))^T \varphi_i(x) \Big) \ \varphi_i(x)
\quad
\text{a.e. on $\dps \cup_{\beta \in \Izero} \cell{\beta}$,}
\\
\tau^h_+(x) = \tau^h_-(x) = 0 \quad \text{a.e. on $\dps \unitcell \setminus \cup_{\beta \in \Izero} \cell{\beta}$.}
\end{gather*}
These two functions belong to $V_0$ and are piecewise constant, and therefore belong to $\approximations_0$. We have $\tau^h = \tau^h_+ + \tau^h_-$ and we introduce
\begin{equation*}
\test^h = \tau^h_+ - \tau^h_- \in \approximations_0,
\end{equation*}
which satisfies, as in Lemma~\ref{lem:cond1}, that
\begin{equation*}
a(\tau^h,\test^h) \geq c \| \tau^h \|_{L^2(\unitcellzero)} \| \test^h \|_{L^2(\unitcellzero)}
\end{equation*}
for some $c>0$ independent of $h$, $\tau^h$ and $\test^h$.
\end{proof}

We are now in position to state the main result of this section, the proof of which follows the same lines as the proof of Theorem~\ref{theo:cond1}:
\begin{theorem}
We assume that~\eqref{eq:hyp1}, \eqref{eq:hyp2} and~\eqref{eq:hyp3} hold. Then problem~\eqref{eq:6} is well-posed.
\end{theorem}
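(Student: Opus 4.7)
The proof follows the same template as Theorem~\ref{theo:cond1}, with Lemma~\ref{lem:cond2} replacing Lemma~\ref{lem:cond1}. The plan is to verify the two hypotheses of the BNB theorem~\cite[Theorem 2.6]{ERN2004} on the discrete spaces.

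First I verify continuity. Since $\approximations_0 \subset V_0$, the continuity estimate already derived in the proof of Theorem~\ref{theo:cond1} (which only used~\eqref{eq:def_a} and the lower bound~\eqref{eq:hyp2}) restricts immediately to $\approximations_0 \times \approximations_0$, giving
\begin{equation*}
\forall \trial^h, \test^h \in \approximations_0, \quad
\left| a(\trial^h, \test^h) \right| \leq C \ \| \trial^h \|_{L^2(\unitcellzero)} \ \| \test^h \|_{L^2(\unitcellzero)}.
\end{equation*}

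Second, the discrete inf-sup condition is exactly the content of Lemma~\ref{lem:cond2}: for each $\trial^h \in \approximations_0$, one can exhibit $\test^h \in \approximations_0$ (constructed from the simultaneous diagonalization of $A^h$ and $A_0$) such that $a(\trial^h,\test^h) \geq c \| \trial^h \|_{L^2(\unitcellzero)} \| \test^h \|_{L^2(\unitcellzero)}$, with $c$ independent of $h$. This supplies the first BNB condition.

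Third, I verify the second BNB condition exactly as in Theorem~\ref{theo:cond1}: let $\test^h \in \approximations_0$ be such that $a(\trial^h, \test^h) = 0$ for every $\trial^h \in \approximations_0$. By symmetry of $a$, this reads $a(\test^h, \trial^h) = 0$ for every $\trial^h$. Applying Lemma~\ref{lem:cond2} to $\test^h$ produces a $\widetilde\test^h \in \approximations_0$ with $a(\test^h,\widetilde\test^h) \geq c \| \test^h \|_{L^2(\unitcellzero)} \| \widetilde\test^h \|_{L^2(\unitcellzero)}$, forcing $\test^h = 0$.

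Both conditions of~\cite[Theorem 2.6]{ERN2004} being satisfied, we conclude that problem~\eqref{eq:6} is well-posed. There is no real obstacle here beyond Lemma~\ref{lem:cond2} itself: the only substantive point is that the coercivity-type bound~\eqref{eq:hyp2h} and the commutation property transfer from $\Aper$ to the cell averages $A^h_\beta$, which was verified in the proof of that lemma; everything else is a verbatim transcription of the continuous argument to the Galerkin subspace $\approximations_0$.
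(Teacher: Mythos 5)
Your proof is correct and matches the paper's intent exactly: the paper simply states that the proof ``follows the same lines as the proof of Theorem~\ref{theo:cond1}'', and you have carried out precisely that argument, substituting Lemma~\ref{lem:cond2} for Lemma~\ref{lem:cond1} and checking the two BNB conditions on $\approximations_0$ together with continuity inherited from $V_0$.
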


\subsection{Error analysis}
\label{sec:math3}

In this section, we estimate the difference between the solution $\tau_p$ to the exact problem~\eqref{eq:5} and its discrete approximation $\tau^h_p$ solution to~\eqref{eq:6}. Following~\cite[Lemma 2.28]{ERN2004} and the fact that the constant $c$ in the inf-sup inequality of the discrete problem is independent of $h$ (see Lemma~\ref{lem:cond2}), we have that
\begin{equation}
\label{eq:cea}
\| \tau_p - \tau^h_p \|_{L^2(\unitcellzero)} \leq C \inf_{\test^h \in \approximations_0} \| \tau_p - \test^h \|_{L^2(\unitcellzero)}
\end{equation}
for some $C$ independent of $h$. We are therefore left with quantifying the best approximation error, and thus to study the regularity of $\tau_p$. To that aim, we first study the regularity of $w_p$ solution to~\eqref{PB:correcteur}, and recall the relation~\eqref{eq:3} between $\nabla w_p$ and $\tau_p$.

If $\Aper$ is H\"older-continuous, then we know that there exists $\alpha > 0$ such that, for any $p \in \RR^d$, we have $w_p \in C^{1,\alpha}(Q)$ (see e.g.~\cite[Theorem 8.22 and Corollary 8.36]{gilbarg-trudinger}). However, the assumption that $\Aper$ is continuous excludes many interesting cases in practice, including composite materials where $\Aper$ is piecewise constant in $Q$. 

We therefore revert to a different setting, namely that of~\cite{li-vogelius}, and assume the following:
\begin{hyp}
\label{eq:hyp4}
There exists a bounded domain $D \subset \RR^d$ with $C^{1,\alpha}$ boundary, $0<\alpha<1$, such that $\overline{Q} \subset\subset D$, and a finite number $M$ of subdomains $D_m$ of $D$, $1 \leq m \leq M$, that are disjoint, such that $\dps \overline{D} = \cup_{1 \leq m \leq M} \overline{D_m}$, and such that all but one of the domains $D_m$ are convex and with $C^2$ boundary. 

For any $1 \leq m \leq M$, let $A^{(m)} \in C^\mu\left(\overline{D_m}\right)$ for some $0<\mu<1$ be a symmetric, matrix-valued function such that $A^{(m)}(x) \geq a_-$ a.e. on $D_m$ (in the sense of symmetric matrices) for some $a_->0$. We assume that the matrix $\Aper$ in~\eqref{PB:correcteur} satisfies the following:
\begin{equation*}
\forall 1 \leq m \leq M, \quad \forall x \in D_m \cap \unitcell, \quad \Aper(x) = A^{(m)}(x).
\end{equation*}
\end{hyp}
This assumption means that, in the unit cell $\unitcell$ complemented by periodicity boundary conditions, there are $M-1$ convex, disjoint inclusions with regular (i.e. $C^2$) boundaries, such that $\Aper$ is H\"older-continuous on each of these inclusions. Outside of these inclusions, $\Aper$ is also H\"older-continuous. By construction, $\Aper$ satisfies~\eqref{eq:hyp1}, is symmetric and in $L^\infty$. We then have the following result:
\begin{theorem}
\label{theo:regu}
We assume that Assumption~\ref{eq:hyp4} holds. Then there exists $\gamma>0$ such that the solution $w_p$ to~\eqref{PB:correcteur} satisfies $w_p \in C^{1,\gamma}\left( \overline{D_m}\right)$ for any $1 \leq m \leq M$ and $p \in \RR^d$.

Furthermore, there exists $\delta>0$ such that the solution $\tau_p$ to~\eqref{eq:5} satisfies $\tau_p \in C^{\delta}\left( \overline{D_m}\right)$ for any $1 \leq m \leq M$ and $p \in \RR^d$.
\end{theorem}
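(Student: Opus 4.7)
The plan is to reduce the statement to a known regularity result for divergence-form elliptic equations with piecewise H\"older continuous coefficients, namely the one established by Li and Vogelius in the reference already cited before the theorem. The first step is to get rid of the constant "source" $p$ in~\eqref{PB:correcteur} by setting $u_p := w_p + p \cdot x$, which is well-defined on $\RR^d$ (and on $D$ in particular) up to an additive constant. Then~\eqref{PB:correcteur} rewrites as the purely homogeneous equation
\begin{equation*}
-\div\left[\Aper \nabla u_p \right] = 0 \quad \text{in } \RR^d.
\end{equation*}
Restricted to the bounded domain $D$ supplied by Assumption~\ref{eq:hyp4}, this equation, together with the subdomain decomposition $\{D_m\}_{1\leq m\leq M}$ and the piecewise $C^\mu$ structure of $\Aper$, fits exactly into the geometric framework of Li--Vogelius (one "matrix" region plus $M-1$ convex inclusions with $C^2$ interfaces, inside a domain with $C^{1,\alpha}$ boundary).

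The second step is to apply the Li--Vogelius interior gradient regularity theorem to $u_p$, which yields the existence of $\gamma>0$, depending only on $d$, $\mu$, $\alpha$, $a_-$ and the geometry, such that $u_p \in C^{1,\gamma}(\overline{D_m})$ for every $1\leq m\leq M$. Since $x\mapsto p\cdot x$ is smooth, $w_p = u_p - p\cdot x$ inherits the same piecewise regularity, which establishes the first claim. (Should the application of Li--Vogelius only deliver the bound on $\overline{D_m\cap D'}$ for $D'\subset\subset D$, the condition $\overline{Q}\subset\subset D$ together with the $\ZZ^d$-periodicity of $w_p$ permits enlarging $D$ so that any relevant $D_m$ is compactly contained in the enlarged domain; this is the only geometric bookkeeping needed.)

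For the second claim, recall the identity~\eqref{eq:3} giving $\tau_p = (\Aper - A_0)(p + \nabla w_p)$. On each fixed $D_m$, Assumption~\ref{eq:hyp4} ensures $\Aper|_{D_m} = A^{(m)} \in C^\mu(\overline{D_m})$, and since $A_0$ is constant we have $\Aper - A_0 \in C^\mu(\overline{D_m})$. The first part of the theorem gives $p + \nabla w_p \in C^\gamma(\overline{D_m})$. Because $\overline{D_m}$ is bounded, the pointwise product of two H\"older continuous matrix- and vector-valued functions is H\"older continuous with exponent equal to the smaller of the two, hence $\tau_p \in C^\delta(\overline{D_m})$ with $\delta := \min(\mu,\gamma)>0$.

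The main obstacle is really just the first step: checking that Assumption~\ref{eq:hyp4} is truly the set of hypotheses under which Li and Vogelius prove their piecewise $C^{1,\gamma}$ estimate, and correctly transferring their statement (typically phrased for a Dirichlet problem on $D$) to our periodic setting via the auxiliary function $u_p$. Once that identification is made, the rest of the argument is an elementary algebraic manipulation and a standard fact about products of H\"older functions; no delicate estimate on the jumps of $\nabla u_p$ across the interfaces $\partial D_m$ is needed, precisely because we content ourselves with regularity on each $\overline{D_m}$ separately, rather than across the interfaces.
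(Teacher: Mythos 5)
Your proposal is correct and follows the paper's own route: apply the Li--Vogelius piecewise $C^{1,\gamma}$ gradient estimate to deduce the regularity of $w_p$, then use the identity $\tau_p=(\Aper-A_0)(p+\nabla w_p)$ together with the stability of H\"older regularity under pointwise products (with exponent $\delta=\min(\mu,\gamma)$) to conclude. The paper's proof is essentially a two-line citation; you have merely filled in the bookkeeping — the change of unknown $u_p=w_p+p\cdot x$ to reach a homogeneous divergence-form equation, and the observation that $\overline{Q}\subset\subset D$ plus periodicity handles the interior-vs.-boundary issue in Li--Vogelius — but there is no difference in substance.
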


\begin{proof}
The first statement is a direct consequence of~\cite[Theorem 1.1]{li-vogelius}, using the note following~\cite[Theorem 1.2]{li-vogelius}. 

The product of two H\"older-continuous functions is also H\"older-continuous. We thus infer the second statement from the first statement and~\eqref{eq:3}.
\end{proof}

\begin{remark}
\label{rem:regu}
In the two-dimensional case, if $\Aper$ is {\em constant} in the inclusions and outside of the inclusions, then a better regularity on $w_p$ can be shown, see e.g.~\cite[Section 8]{li-vogelius} for some cases in that vein.
\end{remark}

We now turn to the main result of this section:
\begin{theorem}
\label{theo:error}
We assume that~\eqref{eq:hyp1}, \eqref{eq:hyp2}, \eqref{eq:hyp3} and Assumption~\ref{eq:hyp4} hold. We also assume that the boundary of $\unitcell_0$ is regular. Let $\tau_p$ and $\tau^h_p$ be the solutions to the exact problem~\eqref{eq:5} and to the discrete problem~\eqref{eq:6}, respectively. Then there exist $C>0$ and $\delta > 0$ such that, for any $h$,
\begin{equation}
\label{eq:error}
\| \tau_p - \tau^h_p \|_{L^2(\unitcellzero)} \leq C h^{\min(1/2,\,\delta)}.
\end{equation}
In turn, the error on the homogenized coefficients is bounded by
\begin{equation*}
\left| A^\star - A^\star_h \right| \leq C h^{\min(1/2,\,\delta)}
\end{equation*}
where the matrix $A^\star_h$ is the numerical approximation of $A^\star$ given by $\dps A^\star_h p = A_0 p + \int_Q \polarization_p^h$ for any $p \in \RR^d$.
\end{theorem}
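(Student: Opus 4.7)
The plan is to combine the quasi-optimality estimate~\eqref{eq:cea} with an interpolation argument that exploits the regularity of $\tau_p$ given by Theorem~\ref{theo:regu}.

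First, via~\eqref{eq:cea}, it suffices to bound the best approximation error $\dps \inf_{\test^h \in \approximations_0} \|\tau_p - \test^h\|_{L^2(\unitcellzero)}$. A natural candidate is the piecewise constant interpolant obtained by averaging: on each cell $\cell{\beta}$ with $\beta \in \Izero$, set $\test^h$ equal to the mean value of $\tau_p$ over $\cell{\beta}$; on the remaining cells, set $\test^h = 0$, as required for $\test^h \in \approximations_0$.

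Next, I would split the cells into three classes and estimate the contribution to $\|\tau_p - \test^h\|^2_{L^2(\unitcellzero)}$ of each class separately. By Theorem~\ref{theo:regu}, $\tau_p$ is bounded on $\unitcellzero$ and H\"older-continuous of exponent $\delta$ on each subdomain $\overline{D_m}$. For \emph{good} cells, namely those with $\cell{\beta} \subset D_m \cap \unitcellzero$ for some $m$, the standard mean-value estimate gives $|\tau_p(x) - \test^h_\beta| \leq C h^\delta$ in $L^\infty(\cell{\beta})$, contributing $O(h^{d+2\delta})$ per cell; summing over the $O(h^{-d})$ such cells yields $O(h^{2\delta})$ in total. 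For \emph{interior interface} cells (those meeting some $\partial D_m$) and for \emph{boundary} cells (those with $\cell{\beta}\cap\unitcellzero \neq \emptyset$ but $\beta \notin \Izero$), the bound degrades to the crude observation that $\tau_p$ and $\test^h$ are both bounded, so each such cell contributes $O(h^d)$. Since the interfaces $\partial D_m$ are of class $C^2$ by Assumption~\ref{eq:hyp4} and $\partial\unitcellzero$ is regular by hypothesis, the number of cells in either category is $O(h^{-(d-1)})$, giving a combined contribution of order $h$.

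Collecting these estimates yields $\|\tau_p - \test^h\|^2_{L^2(\unitcellzero)} \leq C h^{\min(1,\,2\delta)}$, and~\eqref{eq:cea} then gives~\eqref{eq:error}. The bound on the homogenized matrix follows immediately: from~\eqref{eq:tau_Astar}, and because both $\tau_p$ and $\tau_p^h$ are supported in $\unitcellzero$, we have $(A^\star - A^\star_h)\,p = \int_{\unitcellzero}(\tau_p - \tau_p^h)$, and Cauchy--Schwarz converts the $L^2$ estimate into the claimed bound on $|A^\star - A^\star_h|$.

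The main technical obstacle I anticipate is justifying the $O(h^{-(d-1)})$ count of cells straddling $\partial D_m$ or $\partial\unitcellzero$. This amounts to turning the $C^2$/regularity hypotheses into the measure-theoretic statement that a tubular $h$-neighborhood of these hypersurfaces has volume $O(h)$; this is standard but needs to be written out carefully in the present cell-based setting. A secondary point worth checking is that the piecewise $C^\delta$ regularity from Theorem~\ref{theo:regu} indeed implies an $L^\infty$ bound on $\tau_p$ uniform across $\unitcellzero$, which is needed to control the interface and boundary cells.
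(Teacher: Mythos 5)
Your proposal is correct and follows essentially the same route as the paper: quasi-optimality via~\eqref{eq:cea}, construction of a piecewise-constant comparator that vanishes on cells with $\beta\notin\Izero$, and a three-way cell split (interior cells of some $D_m$, interface cells, cells straddling $\partial\unitcellzero$) giving $h^{2\delta}$ from H\"older regularity and $h$ from the $O(h^{1-d})$ count of boundary/interface cells. The only cosmetic difference is that you take cell averages everywhere in $\Izero$, whereas the paper uses the cell-center value $\tau_p(x^h_\beta)$ on good cells and sets the interpolant to zero on interface cells; both choices give the same orders, and your two flagged "points worth checking" are precisely the ones the paper asserts without elaboration (the tubular-neighborhood volume bound and the $L^\infty$ bound on $\tau_p$ following from piecewise $C^\delta$).
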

Note that the definition of $A^\star_h$ is directly inspired from~\eqref{eq:tau_Astar}.

\begin{proof}
The proof essentially goes by building an appropriate $r_h(\tau_p) \in \approximations_0$ which is a good approximation of $\tau_p$ in $L^2(\unitcell_0)$. For any $\beta \in \I$, let $x^h_\beta$ denote the center of the cell $\cell{\beta}$. We build $r_h(\tau_p)$ as follows:
\begin{itemize}
\item[(i)] For any $\beta \not\in \Izero$, i.e. such that $\cell{\beta} \not\subset \unitcellzero$, we set $r_h(\tau_p)(x) = 0$ on $\cell{\beta}$.
\item[(ii)] For any $\beta \in \Izero$ such that the cell $\cell{\beta}$ is accross two or more domains $D_m$, we set $r_h(\tau_p) = 0$ on $\cell{\beta}$.
\item[(iii)] For any $\beta \in \Izero$ such that the cell $\cell{\beta}$ is a subset of one of the sets $D_m$, we set $r_h(\tau_p) = \tau_p(x^h_\beta)$ on $\cell{\beta}$.
\end{itemize}
The function $r_h(\tau_p)$ obviously belongs to $\approximations_0$.

We now estimate $\| \tau_p - r_h(\tau_p) \|_{L^2(\unitcell)}$, splitting the sum over $\beta$ according to the three cases above:
\begin{eqnarray*}
\| \tau_p - r_h(\tau_p) \|^2_{L^2(\unitcell)}
&=&
\sum_{\beta \in \I} \int_{\cell{\beta}} \left| \tau_p(x) - r_h(\tau_p)(x) \right|^2 \, dx
\\
&=&
\sum_{\beta \in (i)} \int_{\cell{\beta}} \left| \tau_p(x) \right|^2 \, dx
+
\sum_{\beta \in (ii)} \int_{\cell{\beta}} \left| \tau_p(x) \right|^2 \, dx
+
\sum_{\beta \in (iii)} \int_{\cell{\beta}} \left| \tau_p(x) - \tau_p(x_\beta^h) \right|^2 \, dx.
\end{eqnarray*}
For the cells in the group (i), if $\cell{\beta} \subset \unitcell \setminus \unitcellzero$, we have $\tau_p(x) = 0$ on $\cell{\beta}$. In the first sum, we are thus left with cells that are accross the boundary of $\unitcell_0$. For the cells in the group (iii), we note that $\tau_p$ is $C^\delta$ on the cell, hence $\left| \tau_p(x) - \tau_p(x_\beta^h) \right| \leq C \left| x - x_\beta^h \right|^\delta$. We thus have
\begin{equation*}
\| \tau_p - r_h(\tau_p) \|^2_{L^2(\unitcell)}
\leq
\sum_{\text{cells accross $\partial \unitcell_0$}} \int_{\cell{\beta}} \left| \tau_p(x) \right|^2 \, dx
+
\sum_m \sum_{\text{cells accross $\partial D_m$}} \int_{\cell{\beta}} \left| \tau_p(x) \right|^2 \, dx
+
C \sum_{\beta \in (iii)} \int_{\cell{\beta}} \left| x - x_\beta^h \right|^{2\delta} \, dx.
\end{equation*}
Since $\tau_p \in C^\delta(\overline{D_m})$ for all $m$, we see that $\tau_p \in L^\infty(\unitcell)$. In addition, since the boundaries of $\unitcell_0$ and of $D_m$ are regular, the number of cells $\cell{\beta}$ that are accross these boundaries scales as $C h^{1-d}$ for some $C$ independent of $h$. We therefore deduce that
\begin{equation*}
\| \tau_p - r_h(\tau_p) \|^2_{L^2(\unitcell)}
\leq
C h \| \tau_p \|_{L^\infty(\unitcell)}
+
C h^{2 \delta}
\leq 
C h^{\min(1, 2\delta)}. 
\end{equation*}
Using~\eqref{eq:cea}, we now obtain
\begin{equation*}
\| \tau_p - \tau^h_p \|_{L^2(\unitcellzero)} 
\leq 
C \inf_{\test^h \in \approximations_0} \| \tau_p - \test^h \|_{L^2(\unitcellzero)}
\leq 
C \| \tau_p - r_h(\tau_p) \|_{L^2(\unitcellzero)}
\leq 
C \| \tau_p - r_h(\tau_p) \|_{L^2(\unitcell)}
\leq 
C h^{\min(1/2, \delta)},
\end{equation*}
which is the claimed bound~\eqref{eq:error}. The bound on the homogenized coefficient directly follows. This concludes the proof of Theorem~\ref{theo:error}.
\end{proof}

Note that, if the grid is consistent with the heterogeneities of $\Aper$ (i.e. if any cell $\cell{\beta}$ is a subset of some $D_m$) and with the set $\unitcell_0$, then the above proof yields that $\| \tau_p - \tau^h_p \|_{L^2(\unitcellzero)} \leq C h^\delta$. If, in addition, $\tau_p$ were very smooth on $\overline{\unitcell}$, the best result we can hope for is $\| \tau_p - \tau^h_p \|_{L^2(\unitcellzero)} \leq C h$, since we use a piecewise approximation of $\tau_p$. In that case, we would have $\left| A^\star - A^\star_h \right| \leq C h$.

\section{Numerical results}
\label{sec:num}

We now use the numerical method described above to estimate the homogenized properties of the 3D microstructure shown on Fig.~\ref{fig:1}, where 500 spherical inclusions are randomly located in the unit-cell $\unitcell$ by means of a standard hard-spheres Monte-Carlo simulation. All inclusions share the same radius $r \approx 0.0575$. The total volume fraction of inclusions is $40\,\%$. The minimal distance between two inclusions is close to the diameter $2r$ of the inclusions. The materials in-between inclusions is called the matrix.

\begin{figure}[htbp]
  \centering
  \includegraphics{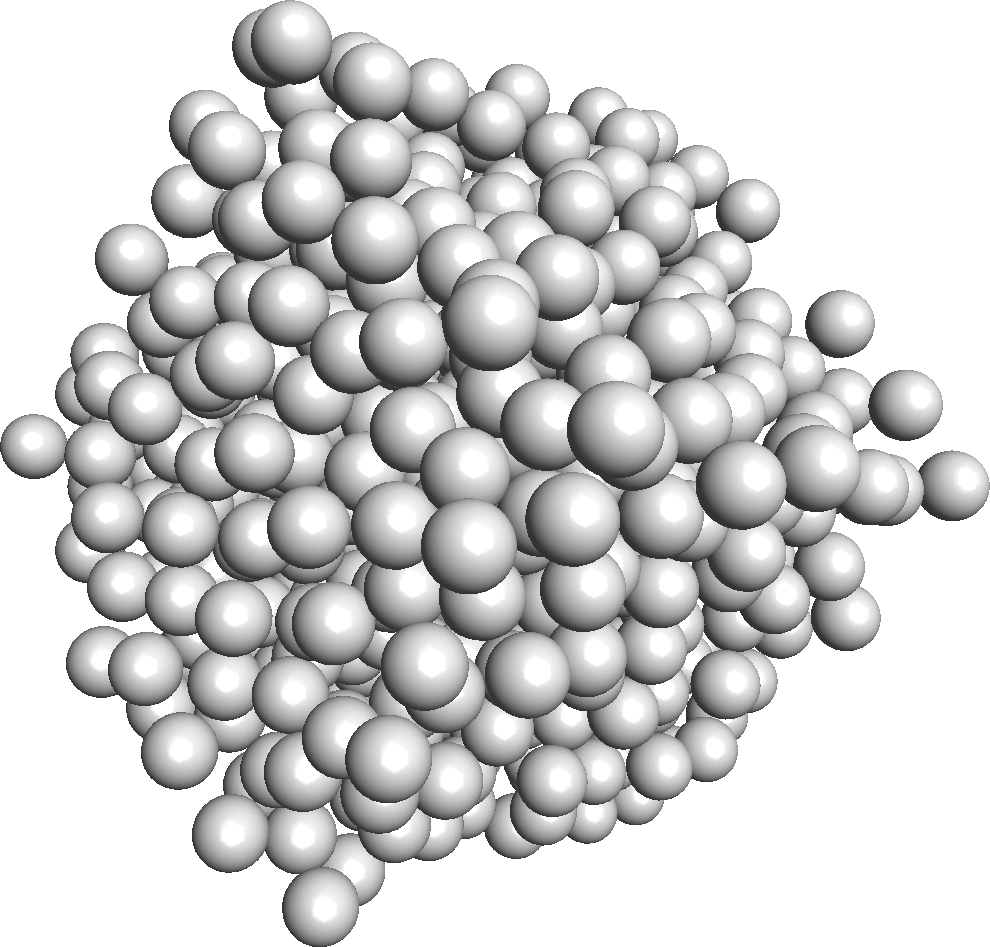}
  \caption{The 3D microstructure considered for the numerical test.}
  \label{fig:1}
\end{figure}

In the above sections, we have assumed that the problem of interest, namely~\eqref{eq:pb_eps_intro}, is a {\em scalar} problem: $u^\eps$ is a real-valued field, and $\Aper$ is a matrix, i.e. a second-order tensor. However, the approach carries over to other types of linear elliptic problems. The simulation presented here is carried out within the framework of linear elasticity. The problem then again reads as in~\eqref{eq:pb_eps_intro}, where $u^\eps$ is now a vector-valued field (representing the displacement), and $\Aper$ is a fourth-order tensor. The problem has a variational structure, hence the tensor $\Aper$ satisfies a symmetry-like property. In the corrector problem~\eqref{PB:correcteur_intro}, $p$ is a second-order symmetric tensor, which is interpreted as a macroscopic strain. The fourth-order homogenized stiffness tensor is then computed by solving six corrector problems (with six linearly independent values of $p$). We focus here on one of these six corrector problems, by choosing for $p$ a unit macroscopic shear strain along the $xy$ plane:
\begin{equation}
\label{eq:choix_p}
p_{\text{shear}} = \left( \begin{array}{ccc} 0 & 1 & 0 \\ 1 & 0 & 0 \\ 0 & 0 & 0 \end{array} \right).
\end{equation}
We decided to work with a shear strain rather than with a uniaxial strain, of the form
\begin{equation*}
p = \left( \begin{array}{ccc} 1 & 0 & 0 \\ 0 & 0 & 0 \\ 0 & 0 & 0 \end{array} \right),
\end{equation*}
because computations under the former are generally more difficult than under the latter. More iterations are typically required to reach convergence.

The matrix and the inclusions are modelled as linear elastic, isotropic materials, the mechanical parameters of which are given in Table~\ref{tab:mech}. In order to show the robustness of the method, we have chosen to work with a high contrast between the matrix and inclusion mechanical properties (the inclusions are 1000 times stiffer than the matrix).

\medskip

The corrector problem~\eqref{PB:correcteur_intro} (with $p_{\text{shear}}$ given by~\eqref{eq:choix_p}) is discretized following the approach described in Section~\ref{sec:disc}. To estimate the equivalent voxel stiffness $A^h_\beta$ defined by~\eqref{eq:def_ah} for different grid sizes $h$, we proceed as follow. Rather than computing the right-hand side of~\eqref{eq:def_ah} for perfectly spherical inclusions, we actually slightly modify the definition of the reference microstructure. This reference microstructure is defined on a very fine grid (of size $h_{\rm ref}=1/4096$) as the following binary microstructure: each voxel is given the value 1 if its center lies within a spherical inclusion; otherwise, the voxel value is zero. The reference inclusions are then the union of the voxels with value 1. Their shape is close to be a ball, but not exactly (up to an error controlled by $h_{\rm ref}$). 

We discretize~\eqref{eq:5} using grids of size $h$, with $1/h=8$, $16$, $32$, $64$, $128$, $256$, $512$ and $1024$. The number of voxels in the unit cell $\unitcell$ is therefore $8^3$, $16^3$, $32^3$, $64^3$, $128^3$, $256^3$, $512^3$ and $1024^3$, respectively. On each voxel $\beta$, $\tau^h_\beta$ is a constant $3 \times 3$ symmetric matrix. The number of degrees of freedom in~\eqref{eq:13} for the finest grid is therefore $6.44 \, \times \, 10^9$. Since the reference microstructure is defined on a fine grid, computing~\eqref{eq:def_ah} is actually straightforward, and amounts to performing local averages. Numerical quadrature rules are not needed.

The reference medium $A_0$ is modelled as a linear elastic, isotropic material, the mechanical parameters of which are given in Table~\ref{tab:mech}. In practice, we do not use the exact, consistent discrete Green operator $\hat{\green}_0^{h,\text{cons}}$, which is too difficult to evaluate, but its filtered approximation described in Section~\ref{sec:filtered}. Since the reference medium is softer than the matrix and the inclusions, the matrix of the linear system~\eqref{eq:13} (with $\hat{\green}_0^{h,\text{cons}}$ replaced by $\hat{\green}_0^{h,\text{filt}}$) is positive definite (see~\cite{BRIS2010A}). We use a conjugate gradient solver to solve~\eqref{eq:13}.

\begin{table}[htbp]
  \centering
  \begin{tabular}{c|c|c|c}
    & matrix & inclusions & reference medium $A_0$ \\
    \hline
    shear modulus & $\mu_{\rm m}$ & $\mu_{\rm i} = 1000 \, \mu_{\rm m}$ & $\mu_0 = 0.5 \, \mu_{\rm m}$ \\ 
    Poisson ratio & $\nu_{\rm m} = 0.3$ & $\nu_{\rm i} = 0.2$ & $\nu_0=0.3$ \\
    \hline
  \end{tabular}
  \caption{Mechanical parameters of the matrix, inclusions and reference medium.}
  \label{tab:mech}
\end{table}

\medskip

The numerical results are presented in Table~\ref{tab:2}, where $A^\star_{xyxy} := p_{\text{shear}} : A^\star : p_{\text{shear}}$. We observe that the difference between the values of $A^\star_{xyxy}$ computed on the two finest grids is roughly of $2\,\%$. We also note that, as expected (see~\cite{BRIS2010A}), the estimation of $A^\star_{xyxy}$ increases as $N$ increases (see also Fig.~\ref{fig:2}).

In addition, if we define as reference value the value of $A^\star_{xyxy}$ obtained on the $1024^3$ grid, then it seems that the numerical error is proportional to $h$. If Theorem~\ref{theo:error} also holds in the case of linear elasticity (recall it is stated in the case of a scalar PDE, in particular because the work~\cite{li-vogelius} is stated for scalar PDEs), then it would ensure that the numerical error is bounded by $C h^\delta$ for some $\delta<1$. The better rate that we observe in practice may be related to the fact that $\Aper$ has a better regularity than just being piecewise H\"older continuous: it is actually piecewise constant. See Remark~\ref{rem:regu} above in that spirit. 

\begin{table}[htbp]
  \centering
  \begin{tabular}{c|c|c|c}
    $N=1/h$ & $A^\star_{xyxy}$ & Wall-clock time [sec] & \# iterations $\mathcal{N}$ \\
    \hline
    8 & 2.3221 & $8.47 \ 10^{-3}$ & 13\\
    16 & 2.4771 & $3.26 \ 10^{-2}$ & 19\\
    32 & 2.8765 & $2.99 \ 10^{-1}$ & 32\\
    64 & 3.4226 & 4.18 & 55\\
    128 & 3.8595 & 58.5 & 91\\
    256 & 4.1096 & 861 & 154\\
    512 & 4.2319 & $1.02 \ 10^4$ & 227\\
    1024 & 4.3284 & $1.0 \ 10^5$ & 269\\
    \hline
  \end{tabular}
  \caption{Estimate of $A^\star_{xyxy}$ as a function of the size of the grid, $N=1/h$. For each grid, we also show the total (wall-clock) time spent in the solver, and the number of iterations $\mathcal{N}$ in the conjugate gradient algorithm to reach convergence (up to a relative error of $10^{-5}$).}
  \label{tab:2}
\end{table}

\begin{figure}[htbp]
  \centering
  \includegraphics{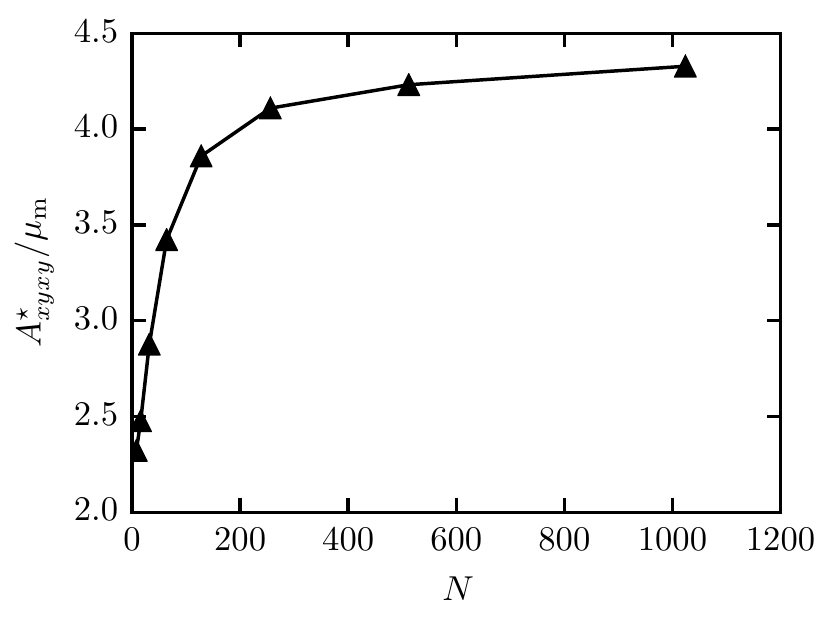}
  \caption{Estimate of $A^\star_{xyxy}$ (normalized by $\mu_{\rm m}$) as a function of the grid-size $N=1/h$.}
  \label{fig:2}
\end{figure}

The cost of each iteration of the linear solver is dominated by the cost of the two FFTs, which scales as $N^3\log(N)$. The total computational time $T$ spent in the solver should therefore scale as $\mathcal{N} \, N^3\log(N)$, where $\mathcal{N}$ is the number of iterations of the conjugate gradient algorithm. On Fig.~\ref{fig:3}, we show the ratio $T / \Big( \mathcal{N} \, N^3\log(N) \Big)$ as a function of $N$, and indeed observe that, for the largest values of $N$, this ratio is roughly a constant.

\begin{figure}[htbp]
  \centering
  \includegraphics{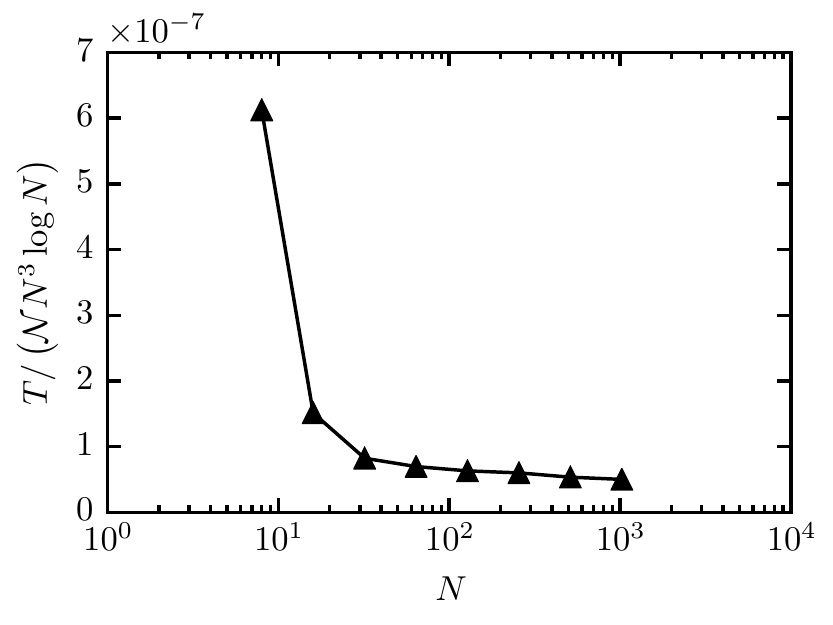}
  \caption{Ratio $T / \Big( \mathcal{N} \, N^3\log(N) \Big)$ as a function of the grid size $N=1/h$.}
  \label{fig:3}
\end{figure}

We eventually investigate the efficiency of the parallelization described in Section~\ref{sec:implem}. We consider the grid with $128^3$ voxels, and solve the associated linear system using $P=1, 2, 4, 8$ processes. Let $T_P$ denote the time spent in the solver when $P$ processes are used. The efficiency index is defined as
\begin{equation}
  \label{eq:17}
  \mathcal E=\frac{T_1}{P \, T_P}.
\end{equation}
A perfect parallelization corresponds to observing $\mathcal E = 1$. As shown on Table~\ref{tab:3}, our implementation is very efficient for 2 and 4 processes, where $\mathcal E$ is close to 1. It is less the case when using $P=8$ processes. Similar trends are observed on the $256^3$ grid. This probably means that, when using $P=8$ processes or more, too much time is spent in communications rather than in local computations. This loss of efficiency will be investigated in the future.

\begin{table}[htbp]
  \centering
  \begin{tabular}{c|c|c}
    $P$ & $T_P$ [sec] & $\mathcal E$\\
    \hline
    1 & 270 & -- \\
    2 & 151 & 0.9 \\
    4 & 76 & 0.9 \\
    8 & 58 & 0.6 \\
    \hline
  \end{tabular}
  \caption{Computation time $T_P$ and efficiency index $\mathcal E$ defined by~\eqref{eq:17} when varying the number $P$ of processes. All the results given in the above tables and figures are obtained with $P=8$ processes.}
  \label{tab:3}
\end{table}

\section*{Acknowledgments}

We are indebted to William Minvielle for a careful proof-reading of a previous version of that manuscript. FL would like to thank Jed Brown and Matthew G. Knepley, who organized a mini-symposium within the PMAA 2014 conference, for their kind invitation. 

\bibliographystyle{elsarticle-num}
\bibliography{manuscript_brisard_legoll}

\end{document}